%%
%% This is file `sample-manuscript.tex',
%% generated with the docstrip utility.
%%
%% The original source files were:
%%
%% samples.dtx  (with options: `manuscript')
%% 
%% IMPORTANT NOTICE:
%% 
%% For the copyright see the source file.
%% 
%% Any modified versions of this file must be renamed
%% with new filenames distinct from sample-manuscript.tex.
%% 
%% For distribution of the original source see the terms
%% for copying and modification in the file samples.dtx.
%% 
%% This generated file may be distributed as long as the
%% original source files, as listed above, are part of the
%% same distribution. (The sources need not necessarily be
%% in the same archive or directory.)
%%
%% The first command in your LaTeX source must be the \documentclass command.
\documentclass[manuscript,screen]{acmart}
\usepackage{booktabs} % For formal tables
\usepackage{algorithm}
\usepackage{algorithmicx}  
\usepackage{algpseudocode}
\usepackage{amsfonts}
\usepackage{graphicx}
\usepackage{subfigure}
\usepackage{multirow}
\usepackage{amsmath}
%\usepackage{graphicx,adjustbox,array}
%%
%% \BibTeX command to typeset BibTeX logo in the docs
\AtBeginDocument{%
  \providecommand\BibTeX{{%
    \normalfont B\kern-0.5em{\scshape i\kern-0.25em b}\kern-0.8em\TeX}}}

%% Rights management information.  This information is sent to you
%% when you complete the rights form.  These commands have SAMPLE
%% values in them; it is your responsibility as an author to replace
%% the commands and values with those provided to you when you
%% complete the rights form.
\setcopyright{acmcopyright}
\copyrightyear{2018}
\acmYear{2018}
\acmDOI{10.1145/1122445.1122456}

%% These commands are for a PROCEEDINGS abstract or paper.
\acmConference[Woodstock '18]{Woodstock '18: ACM Symposium on Neural
  Gaze Detection}{June 03--05, 2018}{Woodstock, NY}
\acmBooktitle{Woodstock '18: ACM Symposium on Neural Gaze Detection,
  June 03--05, 2018, Woodstock, NY}
\acmPrice{15.00}
\acmISBN{978-1-4503-XXXX-X/18/06}

%%
%% Submission ID.
%% Use this when submitting an article to a sponsored event. You'll
%% receive a unique submission ID from the organizers
%% of the event, and this ID should be used as the parameter to this command.
%%\acmSubmissionID{123-A56-BU3}

%%
%% The majority of ACM publications use numbered citations and
%% references.  The command \citestyle{authoryear} switches to the
%% "author year" style.
%%
%% If you are preparing content for an event
%% sponsored by ACM SIGGRAPH, you must use the "author year" style of
%% citations and references.
%% Uncommenting
%% the next command will enable that style.
%%\citestyle{acmauthoryear}

%%
%% end of the preamble, start of the body of the document source.
\begin{document}

%%
%% The "title" command has an optional parameter,
%% allowing the author to define a "short title" to be used in page headers.
\title{Optimized Multivariate Polynomial Determinant on GPU}

%%
%% The "author" command and its associated commands are used to define
%% the authors and their affiliations.
%% Of note is the shared affiliation of the first two authors, and the
%% "authornote" and "authornotemark" commands
%% used to denote shared contribution to the research.
\author{Jianjun Wei}
%\authornote{Dr.~Trovato insisted his name be first.}
\orcid{1234-5678-9012}
\affiliation{%
  \institution{East China Normal University}
%  \streetaddress{P.O. Box 1212}
%  \city{Dublin}
%  \state{Ohio}
%  \postcode{43017-6221}
}
\email{wjjfast@163.com}

\author{Liangyu Chen}
%\authornote{The secretary disavows any knowledge of this author's actions.}
\affiliation{%
  \institution{East China Normal University}
%  \streetaddress{P.O. Box 1212}
%  \city{Dublin}
%  \state{Ohio}
%  \postcode{43017-6221}
}
\email{lychen@sei.ecnu.edu.cn}

%\author{Lars Th{\o}rv{\"a}ld}
%%\authornote{This author is the one who did all the really hard work.}
%\affiliation{%
%  \institution{The Th{\o}rv{\"a}ld Group}
%  \streetaddress{1 Th{\o}rv{\"a}ld Circle}
%  \city{Hekla}
%  \country{Iceland}}
%\email{larst@affiliation.org}
%
%\author{Valerie B\'eranger}
%\affiliation{%
%  \institution{Inria Paris-Rocquencourt}
%  \city{Rocquencourt}
%  \country{France}
%}
%\author{Aparna Patel}
%\affiliation{%
% \institution{Rajiv Gandhi University}
% \streetaddress{Rono-Hills}
% \city{Doimukh}
% \state{Arunachal Pradesh}
% \country{India}}
%%\author{Huifen Chan}
%%\affiliation{%
%%  \institution{Tsinghua University}
%%  \streetaddress{30 Shuangqing Rd}
%%  \city{Haidian Qu}FOrce
%%  \state{Beijing Shi}
%%  \country{China}
%%}
%
%%\author{Charles Palmer}
%%\affiliation{%
%%  \institution{Palmer Research Laboratories}
%%  \streetaddress{8600 Datapoint Drive}
%%  \city{San Antonio}
%%  \state{Texas}
%%  \postcode{78229}}
%%\email{cpalmer@prl.com}
%%
%%\author{John Smith}
%%\affiliation{\institution{The Th{\o}rv{\"a}ld Group}}
%%\email{jsmith@affiliation.org}
%%
%%\author{Julius P.~Kumquat}
%%\affiliation{\institution{The Kumquat Consortium}}
%%\email{jpkumquat@consortium.net}
%
%% The default list of authors is too long for headers.
%\renewcommand{\shortauthors}{B. Trovato et al.}

%%
%% By default, the full list of authors will be used in the page
%% headers. Often, this list is too long, and will overlap
%% other information printed in the page headers. This command allows
%% the author to define a more concise list
%% of authors' names for this purpose.
\renewcommand{\shortauthors}{Jianjun Wei and Liangyu Chen.}

%%
%% The abstract is a short summary of the work to be presented in the
%% article.
\begin{abstract}
We present an optimized algorithm calculating determinant for multivariate polynomial matrix on GPU. The novel algorithm provides precise determinant for input multivariate polynomial matrix in controllable time. Our approach is based on modular methods and split into Fast Fourier Transformation, Condensation method and Chinese Remainder Theorem where each algorithm is paralleled on GPU. The experiment results show that our parallel method owns substantial speedups compared to Maple, allowing memory overhead and time expedition in steady increment. We are also able to deal with complex matrix which is over the threshold on Maple and constrained on CPU. In addition, calculation during the process could be recovered without losing accuracy at any point regardless of disruptions. Furthermore, we propose a time prediction for calculation of polynomial determinant according to some basic matrix attributes and we solve an open problem relating to harmonic elimination equations on the basis of our GPU implementation.
\end{abstract}

%%
%% The code below is generated by the tool at http://dl.acm.org/ccs.cfm.
%% Please copy and paste the code instead of the example below.
%%

\begin{CCSXML}
<ccs2012>
<concept>
<concept_id>10002950.10003705.10011686</concept_id>
<concept_desc>Mathematics of computing~Mathematical software performance</concept_desc>
<concept_significance>500</concept_significance>
</concept>
</ccs2012>
\end{CCSXML}

\ccsdesc[500]{Mathematics of computing~Mathematical software performance}

%%
%% Keywords. The author(s) should pick words that accurately describe
%% the work being presented. Separate the keywords with commas.
\keywords{Determinant, Graphics processing units, CUDA}

%%
%% This command processes the author and affiliation and title
%% information and builds the first part of the formatted document.
\maketitle

\section{Introduction}
Determinant is a value that can be computed from the elements of a square matrix and encodes certain properties of the linear transformation described by the matrix. It is an important and fundamental algorithm with a wide range of application from symbolic computing to numeric algorithm, such as resultants of two polynomials and Vandermonde identity, or solving equations with high degree.

The resultant of two univariate polynomials over a field or over a commutative ring is commonly defined as the determinant of their Sylvester matrix\cite{Villard:2018:CRG:3208976.3209020}. This allows the process to judge whether two polynomials have relatively prime or multiple roots merely according to the value of the determinant, avoiding complex process to get the solution of the equations. Vandermonde identity also benefits from the determinant when being used in polynomial interpolation since inverting the Vandermonde matrix allows expressing the coefficients of the polynomial in terms of the $\alpha_i$ and the values of the polynomial at the $\alpha_i$\cite{Floater:2017:PII:3137843.3137972}. Also, Vandermonde determinant is useful in the representation theory of the symmetric group\cite{fox2018irreducible}.

However, using CPU computing determinant is a daunting task and yields major issues during the calculation. For numeric matrix, although $2 \times 2$ determinant could be calculated easily, computing determinant is more time-consuming for large matrices owe to colossal of elementary row or column operations\cite{faal2018new}. For symbolic matrix, the standard way to calculate determinant is to break it down into more determinants of lower degree by taking the products of any row or column entry and the determinant of its complementary minor, then alternately adding and subtracting the results. Yet, this process brings expansion of the minors during the process and memory proliferates fast, increasing the complexity. 

Direct application of determinant on CPU also lacks the amount of parallelism. A prevailing computing method for determinant is on a single CPU which makes possible the serial computing that a problem is broken into a discrete series of instructions that are executed one after another, but it dismisses as the performance of single CPU the complexity of determinant and the power of multi-core and many-core architecture. Even though problems can be run with multiple CPUs so that they can be solved simultaneously, the number of CPU is confined. 

This makes using Graphics processing units (GPU) desirable because of its sheer memory bandwidth compared to CPU\cite{Mittal:2015:SCH:2775083.2788396}. The GPU is becoming very attractive for scientific computing applications that are targeting high performance. And, a large portion of thread scheduling is done through the decomposition of the application into serious of GPU kernels. Rather, their highly parallel structure renders them more efficient than general-purpose CPUs for algorithms that process large blocks of data in parallel \cite{DBLP:conf/parco/HaqueMX15}.

%CUDA is developed by NVIDIA for general computing on GPUs. 
A GPU implementation of determinant can be used to accelerate CPU by transferring matrix into GPU memory, calculating determinant on GPU, and copying back the value to CPU memory. With CUDA, a parallel computing platform and programming model, developers are able to dramatically speed up computing applications by harnessing the power of GPUs\cite{Kirk:2007:NCS:1296907.1296909}. However, in the general case of symbolic matrices coefficients of the results are usually compressed or replaced by approximate values so that they can be presented and calculated on CPU. This also makes parallelization a challenge. Maintaining accuracy during process requires equivalent transformation from real number field into finite field with modular methods\cite{DBLP:conf/issac/DahanMSWX05}. In this paper, we present algorithms that calculate determinant for symbolic matrix over big prime field on GPU. Our proposal is to get precise value of the symbolic determinant in predictable time. We focus on multivariate polynomial matrix and our algorithm relies on modular methods.

%which allow us to solve polynomial questions over big prime field instead of real number field. This maintains accuracy with the reason that some real number that are fairly too large to calculating and presenting on CPU could be equivalently transformed under big prime field and processed without being compressed or replaced by approximate values.

For our multivariate polynomial determinant algorithm over big prime field, we transfer symbolic computation to numerical computation with modular method since it is not suitable, even inefficient, to present polynomial expression in memory, which would occupy large bulk of limited space. Our parallel algorithm could be split into three major parts: fast Fourier transform (FFT), condensation numeric determinant method and Chinese Remainder Theorem (CRT). We optimize these algorithms so that evaluation and interpolation could be used on multivariate polynomials. In addition, the performance of condensation numeric determinant method and CRT has been improved by reducing row or column matrix operations and amplifying GPU attributes. We report a GPU implementation and our experimental results show that:

\begin{itemize}
  \item [(1)] 
  For multivariate polynomial matrix with much higher order or degree, our GPU algorithm could provide precise value of its determinant in acceptable time and obtain significant speedups with respect to the software calculating on CPU.
  \item [(2)]
  We provide an algorithm that will not suffer from drastic memory soaring, which means our algorithm retains its computational ability and offers refined calculation when encountering matrix that could not afford by CPU.
  %For dense polynomial matrix that CPU could not afford determinant calculation, our algorithm could still offer refined calculation. Besides, we could forecast the running time according to some of the polynomial matrix attributes, convenience and meaningful. 
   
  \item[(3)]
  We could forecast the running time according to some of the polynomial matrix attributes. Besides, our algorithm is able to go on calculating at any point even though it is disrupted by some other factors.
  %Moreover, it does not need much temporary storage since temporary results are stored in on-chip memories which are of a small size, resulting into the fact that memory does not soar drastically during the process. in much fewer memory overhead 
\end{itemize}

The rest of the paper is organized as follows: Section 2 presents the whole algorithm and explains how symbolic computing could be transferred into numeric computing. In Section 3, optimized algorithms are described as well as their GPU implementations to show how our determinant on GPU could be used to accelerate determinant on CPU. Section 4 shows the experimental results. In Section 5, we offer a time estimation for the computation. Section 6 explains an open problem relating to harmonic elimination equations that could not be calculated on CPU and show how it is solved using the GPU method we proposed. Finally, the conclusions are stated. 

\section{determinant algorithm}
As indicated above, implementation of symbolic determinant on CPU will occupy large amounts of memory, thus, we transfer symbolic computation into numerical computation. However, numerical computation still has some unsolved problems, especially with regard to its computational availability such as big coefficients and precision. To figure out an optimal solution, we consider the best method to do numerical computation in our algorithm is the modular method over prime field\cite{article:ACTRC}, and the reasons are as follows:

%so that each polynomial will be presented as a unique vector.
%Since each element in symbolic matrix is a unique polynomial, we could transfer symbolic computation into numerical computation by interpolation, which could be used through a given dataset where each interpolation node is deliberately chosen so that each node would correspond to one numerical matrix.

\begin{itemize}
  \item [(1)]
    The numerical expression, such as coefficients of variable, could be presented and calculated on CPU and GPU.
  \item [(2)] 
    The modular method assures that numerical matrix has the same meaning as symbolic matrix but differs just in expression. Therefore, accuracy would not be lost during the process.

\end{itemize}

\begin{algorithm}
\caption{Determinant algorithm using modular method} 
\label{alg:determinant} 
\begin{algorithmic}[1] 
\Procedure{Determinant}{$vn, r, \vec{d},\vec{p}$}
\State compute $\vec{p}$ size: $pn$,
\State let $i = 0$, 
%\For{$i=0$ to $pn$}
\While{$i < pn$}
\State FFT$(vn, r, \vec{d}, p_i)$,
\State DET$(r, p_i)$,
\State IFFT$(vn, r, \vec{d}, p_i)$,
\EndWhile
%\EndFor 
\label{euclidendwhile} 
\State D $:=$ CRT $(\vec{d}, \vec{p}, pn)$,
\State \textbf{return} $(D)$
%\Comment{The gcd is b} 
\EndProcedure 
\end{algorithmic} 
\end{algorithm}

Algorithm \ref{alg:determinant} computes the determinant using certain modular method over prime field, which is shown in Figure \ref{flow_path} . We represent each chosen prime that we use in modular method as a vector $\vec{p} = (p_0, p_1, \dots, p_{pn})$ of length $pn$, and the degree of each variable as a vector $\vec{d} = (d_0, d_1, \dots, d_{vn} )$ of length $vn$. %which  represents total account of the variable.

Now, given a symbolic matrix with order $r$, we explain how to choose the prime as a unique member of prime field. According to the coefficients of each element in the matrix, the upper bound of the determinant coefficient could be easily computed and we note it $boundary$. Then, for $i = 0, \dots, k$ we shall have $ p_0 \cdot p_1 \cdots p_k >= boundary$, which means the product of the prime in prime field should be over the upper bound where $k$ is the minimum that satisfy the formula. Meanwhile, we enlarge the degree of each variable into at most a power of 2, and note the largest one among them $2^{q_{max}}$. This gives a $\vec{d}$ of $(15, 7, 9)$, and we would replace it with $(16, 8, 16)$. We observe that $ q_{max} =4$ since the largest degree is 16. The prime in the prime field should admits $2^{q_{max}}$-th primitive root, thus prime $p$ could be represented as:
%until it becomes the power of 2%

\begin{equation}
\label{primes}
  p = c \cdot 2^{q_{max}} + 1
\end{equation}
where $c$ is a constant number. Usually, we choose large prime with 9 digits since starting with smaller primes such as 97 results in more primes that are required to comprise the equivalent prime field, which is time-consuming when reconstructing large coefficients.

%\begin{equation}
%\label{primes}
%  p_0 \cdot p_1 \cdots p_k >= boundary
%\end{equation}

%FFT\_GPU
\noindent \textbf{FFT}. We present our implementation for constructing FFT-based evaluation method that transfers the symbolic computation into numerical computation. The first step is to evaluate the element of input symbolic matrix, the polynomial more precisely. For our implementation, polynomial is encoded as a vector containing all of its coefficients by expanding the amount of its items into power of 2 by filling 0. At this point, the polynomial can be cut evenly when its expanded version is too large to putting into global memory on GPU or not fitting threads grids well. For example, $F = 1 + 2x + 3xy + 4x^2 + 5y^2$ can be expanded as $F = 1 + 0 \cdot y + 5y^2 + 2x + 3xy + 0 \cdot xy^2 + 4x^2 + 0 \cdot x^2y + 0 \cdot x^2y^2$ and encoded as a vector [1, 0, 5, 2, 3, 0, 4, 0, 0].

%univariate polynomials are encoded as a vector of coefficients. Multivariate polynomials with $vn$ variables are encoded recursively as a univariate polynomial with $(vn-1)$ polynomials as coefficients. We extend the number of coefficients into a power of 2 by filling 0 because the polynomial can be cut evenly when it is too large to putting into global memory on GPU or not fitting threads grids well. For example, $F = 1 + 2x + 3xy + 4x^2 + 5y^2$ can be encoded as a vector [1,2,3,4,5,0,0,0]. Only when the degree boundary exactly equals the number of interpolation nodes could we definitely construct a polynomial, thus, we need $n=d_0 \cdot d_1 \cdots d_{vn}$ interpolation nodes during our method.

\begin{figure*}[htbp]
\includegraphics[height=2.75in, width=5.9in]{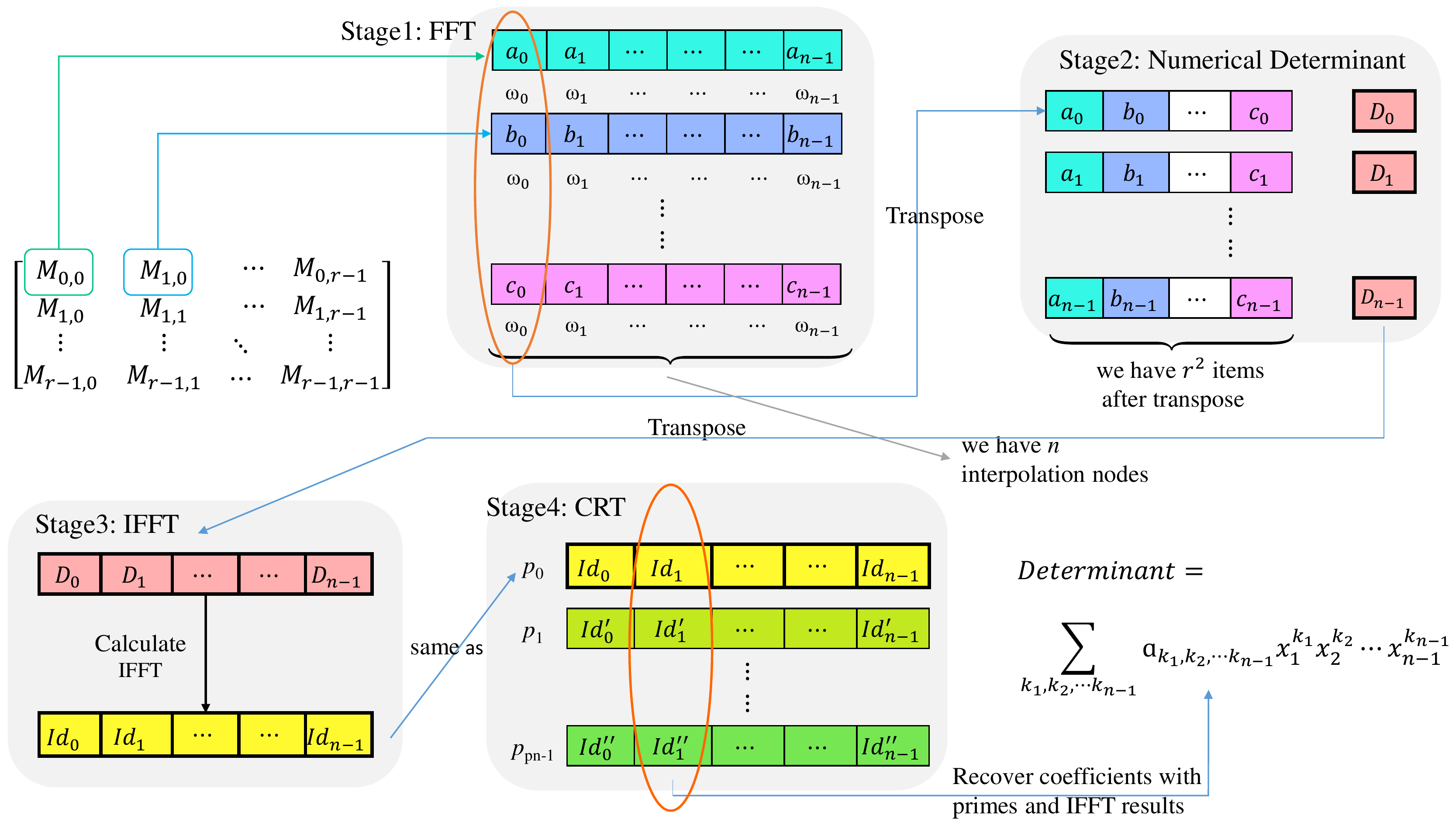}
\caption{4-stage optimized determinant method. Black halos represent interim results, which are shifted as a whole.}
%\label{cooleyTukeyVsStockham}
\label{flow_path}
\end{figure*}

Only when the degree boundary, that is $d_0 \cdot d_1 \cdots d_{vn}$ noted as $n$, exactly equals the number of interpolation nodes could we definitely construct a polynomial, thus, we need $n$ interpolation nodes during our method. An $N$-th root of unity, where $k$ is a positive integer for $ 1 < k \leq N$, is a number $\omega \in \mathcal{R}$ satisfying the equation $\omega^k = 1$. The element $\omega \in \mathcal{R}$ is a $principle$ $N$-th root of unity if $\omega^N = 1$ and for all $ 1 < k \leq N$, we have

\begin{equation}
  \sum_{j=0}^{N-1} \omega^{jk} = 0.
\end{equation}
Recall the Discrete Fourier Transform(DFT) over finite field\cite{EPSFFTGI, mohajerani2016fast}. Let $\omega \in \mathcal{R}$ be a principle $N$-th root of unity. The $N$-point DFT at $\omega$ is a linear function, mapping the vector $\vec{a}=(a_0, \dots,a_{N-1})^T$ to $\vec{b}=(b_0, \dots,b_{N-1})^T$ by $\vec{b}=\Omega \vec{a}$, where $\Omega = (\omega^{jk})_{0 \leq j,k \leq N-1}$. Assume that $N$ could be factorized $JK$, with $J,K > 1$. Recall Cooley-Tukey factorization formula:

\begin{equation}
\label{cooley-turkey}
  DFT_{JK} = (DFT_J \otimes I_K)D_{J,K}(I_J \otimes DFT_K)L_{J}^{JK}.
\end{equation}

For two matrices $A$, $B$ over $\mathcal{R}$ with respective formats $m \times n$ and $q \times s$, we denote $A \otimes B$ over $\mathcal{R}$ an $mq \times ns$ matrix over $\mathcal{R}$ called the tensor product of A by B and defined by 

\begin{equation}
\label{AtimesB}
  A \otimes B = \lbrack a_{kl}B \rbrack_{k,l} \quad with \quad  A = \lbrack a_{kl}  \rbrack _{k,l}.
\end{equation}

In Equation \ref{AtimesB}, DFT$_{JK}$, DFT$_{J}$ and DFT$_{K}$ are respectively the $N$-point DFT at $\omega$, the $J$-point DFT at $\omega^K$, the $K$-point DFT at $\omega^J$. The $stride \ permutation$ matrix $L_{J}^{JK}$ permutes an input vector $x$ of length $JK$ as follows

\begin{equation}
   x \lbrack iJ+j \rbrack  \mapsto x\lbrack iJ+j \rbrack ,
\end{equation}
for all $0 \leq j < J$, $0 \leq i < K$. $L_{J}^{JK}$ performs as a transposition of a matrix if $x$ is viewed as a $K \times J$ matrix. The $ diagonal \ twiddle \ matrix \ D_{J,K}$ is defined as:

\begin{equation}
   D_{J,K} = \bigoplus_{j=0}^{J-1}diag(1, \omega^j, \dots, \omega^{j(K-1)} ).
\end{equation}
Equation \ref{cooley-turkey} implies various divide-and-conquer algorithms for computing DFTs efficiently, often referred to fast Fourier transforms\cite{DBLP:conf/issac/ChenCMM17}. One of the famous variant of Cooley-Tukey FFT is Stockham FFT, which could be defined as:

\begin{equation}
\label{Stockham}
   DFT_{2^l}= \prod_{i=0}^{l-1}(DFT_2 \otimes I_{2^{l-1}})(D_{2,2^{l-i-1}} \otimes I_{2^i})(L_2^{l-i} \otimes I_{2^i}),
\end{equation}
where $I_t$ is the identity matrix of order $t$. Stockham FFT could be found in \cite{DBLP:conf/afips/Stockham66}. For the Stockham factorization, the identity matrix $I_t$ only appears on the right while Cooley-Tukey FFT \cite{belilita2017enhanced} on the left. The key difference brings a significant performance gap since Cooley-Tukey FFT is accessing the powers of $\omega$ by performing larger and larger jumps by computing not only the powers $(1, \omega, \dots, \omega^{n/2-1})$ but also all jumped power while Stockham FFT avoids the problem by packing all the accesses to a power of $\omega$ together, resulting in a broadcasting inside a thread block. Thus, Stockham FFT achieves coalesced accesses, which could be combined as one instruction by GPU, making parallel computing less a problem. Hence, we optimize the Stockham FFT so that it can be performed on multivariate polynomial and implement it on GPU.

%\begin{figure}
%\includegraphics[height=1.5in, width=2.5in]{pic/cooleyTukeyVsStockham.pdf}
%\caption{Cooley-Tukey FFT vs Stockham FFT.}
%\label{cooleyTukeyVsStockham}
%\end{figure}

\noindent \textbf{DET}. According to the result of FFT, each element of the matrix, which is actually a polynomial, is transferred into a particular numerical value at the interpolation node which is noted $\vec{\omega_i} =(\omega_0, \omega_1, \dots, \omega_{vn})$, where each $\omega_i$ corresponds to variable. Thus, for any $i = 0, \dots, n-1 $, each $\vec{\omega }_i $ matches a numerical matrix $M$, that is $ \vec{\omega }_i  \mapsto M(\vec{\omega }_i)$. We compute the determinant of $M$ and note it $D$. It appears that $ \vec{\omega }_i  \mapsto D(\vec{\omega }_i)$, which could also be represented as ordered pair like $(\vec{\omega }_i, D(\vec{\omega }_i))$. Condensation method is used to calculate the numerical determinant but its limitation lies on the account of elementary transformation in matrix. We will optimize the condensation method on GPU so that it increases throughput and the performance is improved.

\noindent \textbf{IFFT}. Recall that $(\vec{\omega }_i, D(\vec{\omega }_i))$ consists of interpolation node set. Since each $\vec{\omega }_i$ is different from others, there is a unique polynomial $A$ that $D(\vec{\omega }_k) = A(\vec{\omega }_k)$ for $k = 0, 1, \dots, n-1$, thus, $A$ is the determinant on a certain prime. IFFT is the inverse process of FFT and its implementation on GPU is also similar to the one of FFT.

\noindent \textbf{CRT}. The Chinese remainder theorem is widely used to compute large integers, designed to replace the original computation with several smaller ones. We observe that field $\mathbb{Z}/p\mathbb{Z}$ could be divided into several subfields such as $\mathbb{Z}/p\mathbb{Z} \simeq \mathbb{Z}/p_0\mathbb{Z} \times \cdots \times \mathbb{Z}/p_{pn-1} \mathbb{Z}$. CRT provides a much faster way than the direct computation if the prime $p$ and the number of operations are large, which recovers the determinant of symbolic matrix without sacrificing computation accuracy. Since each prime $p_i$ in the prime field $\mathbb{Z}/p_i\mathbb{Z}$ maps a particular polynomial representing the determinant over $\mathbb{Z}/p_i\mathbb{Z}$ called $A_i$, we consider the problem of finding the precise determinant $D$ of the input symbolic matrix over field $\mathbb{Z}$ as:

\begin{center}
$D \equiv A_0 \mod p_0$

$\vdots$

$D \equiv A_{n-1} \mod p_{pn-1}$.

\end{center}
The CUDA implementation on GPU is presented in Section 3. The parallel computing divides the large problem into smaller ones and carries out simultaneously, leading a high performance computing.

\section{IMPLEMENTATION ON GPU}
Section 2 suggests strategies that are useful to avoid soaring memory and computation complexity. However, the improved algorithms suffer from the following bottlenecks, if implemented iteratively. In one respect, CPU is burdened by serious atomic memory contention. On the other hand, the runtime imposes a maximum limit when coping with great deal of numerical computing in series algorithm that is detailed above. Nevertheless, these aforementioned limitations can be alleviated by using parallel computing, which is more effective than general-purpose CPU on processing algorithms with large block of data. This entails GPU because of its highly parallel structure.

In this section, we discuss the GPU implementation techniques over $\mathbb{Z}/p\mathbb{Z}$ field. We begin with the optimized FFT algorithm, and then proceed with an in-depth discussion of the condensation algorithm which is used to compute numerical determinant. IFFT is used to produce accurate determinant of original matrix $M$ over $\mathbb{Z}/p\mathbb{Z}$ field. The final step is the Chinese reminder theory, which is used to recover the exact value of the determinant of input polynomial matrix.

\subsection{FFT in  $\mathbb{Z}/p\mathbb{Z}$ }
As we have pointed above, Stockham FFT is an extraordinary method which is geared up for GPU when transferring symbolic expression into numerical expression in the first stage.  
Thus we suggest parallelization strategies for Stockham FFT, which could be applied not only for univariate polynomials but also for multivariate polynomials. Here we illustrate the implementation of Stockham FFT based on GPU, employing the mentioned strategies purposed in Section 2. Recall that Stockham FFT could be divided into three steps:

\begin{itemize}
  \item [(1)] 
    $S_1: x \longrightarrow (L_2^{l-i} \otimes I_{2^i})x$.
  \item [(2)]
    $S_2: x \longrightarrow (D_{2,2^{l-i-1}} \otimes I_{2^i})x$.
  \item [(3)]
    $S_3: x \longrightarrow (DFT_2 \otimes I_{2^{l-1}})x$.
\end{itemize}

According to the definition, $L_2^{l-i} \otimes I_{2^i}$ realize this stride permutation of the matrix $M$ whose effect is to perform the reordering as matrix transposition. $D_{2,2^{l-i-1}} \otimes I_{2^i}$ is a diagonal matrix of size $2^{l-i}$ and thus $D_{2,2^{l-i-1}} \otimes I_{2^i}$ is again a diagonal matrix of size $n$, with each diagonal element repeated $2^i$ times. Hence, step $S_2$ simply scales $x$ with powers of the primitive root of unity $\omega$. Step $S_3$ is a list of basic butterflies with the stride size $n/2$, and this step accesses data in a very uniform manner.

\subsubsection{Multivariate FFT on GPU}

Solving univariate polynomial system, which is mentioned in \cite{2012JPhCS.385a2014A}, and bivariate polynomial system in \cite{DBLP:journals/cca/MazaP11} still have some limitations, let alone multivariate polynomial system for symbolic computation. However, our optimized efforts on multivariate FFT algorithm can be oriented to overcome these limitations.  

Let $F$ be the vector that consists of the coefficients of the polynomial, which is one of the element of matrix $M$. In each iteration of the algorithm, we update the coefficients according to each variable and its primitive root $\omega$. After $vn$ iterations, all variables have been calculated, and thus, one multivariate polynomial has been evaluated completely, which is the result of Stockham FFT on the given polynomial. The pseudocode is given in Algorithm \ref{alg:FFT_GPU}.

\begin{algorithm}[htb]  
\caption{Stockham FFT algorithm optimized on GPU} 
\label{alg:FFT_GPU} 
\begin{algorithmic}[1] 
%FFT\_GPU$(vn, r, \vec{d}, p_i)$,
\Procedure{StockhamFFTOnGPU}{$vn, r, \vec{d}, p_i, F$}
\For{$j=0$ to $vn-1$}
\State let $colDim = d_{vn-1}$, $rowDim = d_0d_1 \cdots d_{vn-2}$ 
\State We compute how much data needed to be sent to GPU and noted $perSendRow$
%\State and noted $perSendRow$
\For{$i=0$ to $rowDim$ \textbf{by} $perSendRow$}
\State FFT\_GPU$(F, vn, r, \vec{d}, p_i)$
\EndFor
\State TranspositionOnGPU$(F,colDim, rowDim)$
\State let $colDim = d_{vn-1}$
\For{$i=vn-2$ to $0$}
\State $d_{i+1} = d_i$
\EndFor
\State $d_0 =  colDim$

\EndFor

%\Comment{The gcd is b} 
\EndProcedure 
%\State \textbf{end procedure}
\end{algorithmic} 
\end{algorithm}

The algorithm above is split in three parts: line 3 where one variable will be chosen to be evaluated; lines 4-7 where how much data will be decided to send and evaluate in GPU; and line 8-13 where the position of the variable will be exchanged in $\vec{d}$. In what follows, we will refer to these parts respectively. For the first part, we choose the last variable $d_{vn-1}$ as "unknown" variable, which is waiting for evaluating and the product of the rest as "known" variable, which means that these variables have already been evaluated. In detail, we optimize the Stockham FFT for multivariate polynomial as follows:

\begin{itemize}
  \item [(1)] 
    For univariate polynomial, we evaluate the polynomial directly.
  \item [(2)]
    For bivariate polynomial, writing the variable as the ordered pair $G(x_2, x_1)$ for convenience where the first position of $G$ is regarded as "unknown" while the second position is "known", we first assume $x_1$ as "known" and evaluate $x_2$ with interpolation nodes which is composed by the corresponding primitive root $\omega$. Then, we evaluate $x_1$ using similar way. In this algorithm, we consider the polynomial, or its coefficient expression $F$ more precisely, as a rectangle. We do the transposition after evaluating $x_2$ for the reason that coalesced memory on GPU could be accessed. The process is shown in Figure \ref{bivariate FFT polynomial}.
  \item [(3)]
    For multivariate polynomial, that is the polynomial whose variable amount is larger than 3, we evaluate the polynomial with the method similar to the one in processing bivariate polynomial with the difference that the data will be seen as a multidimensional vector. We consider the multidimensional vector as two-dimensional vector by choosing one variable as "unknown" and the rest as "known". Concretely, let variables as an ordered pair $H(x_1, x_2, \dots, x_n)$, it is obvious that $H$ could be transferred into as $G^\prime(x_n, x_1 x_2 \cdots x_{n-1})$ with two dimension where we evaluate $x_n$ while the rest is considered "known". Thus multivariate polynomial has been transferred into bivariate polynomial to a certain degree. Iteratively, the ordered pair becomes $G^{\prime\prime}(x_n x_{n-1}, x_1 x_2 \cdots x_{n-2})$ when evaluating $x_{n-1}$. The process is shown in Figure \ref{multivariate FFT polynomial}.
\end{itemize}

\begin{figure}[htbp]
\centering

\subfigure[Evaluate bivariate polynomial.]{
\begin{minipage}[t]{1\linewidth}
\label{bivariate FFT polynomial}
\centering
\includegraphics[width=2.9in]{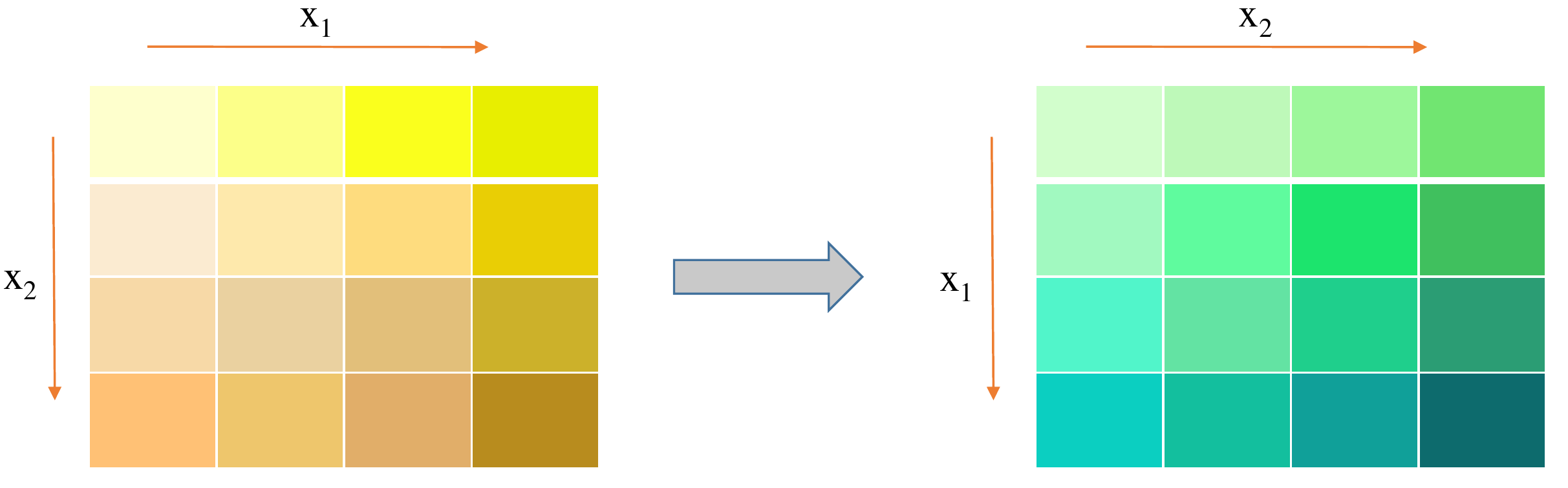}
%\caption{fig1}
\end{minipage}%
}%

                 %这个回车键很重要 \quad也可以
\subfigure[Evaluate multivariate polynomial, here $v_n=3$.]{
\begin{minipage}[t]{1\linewidth}
\label{multivariate FFT polynomial}
\centering
\includegraphics[width=3.0in]{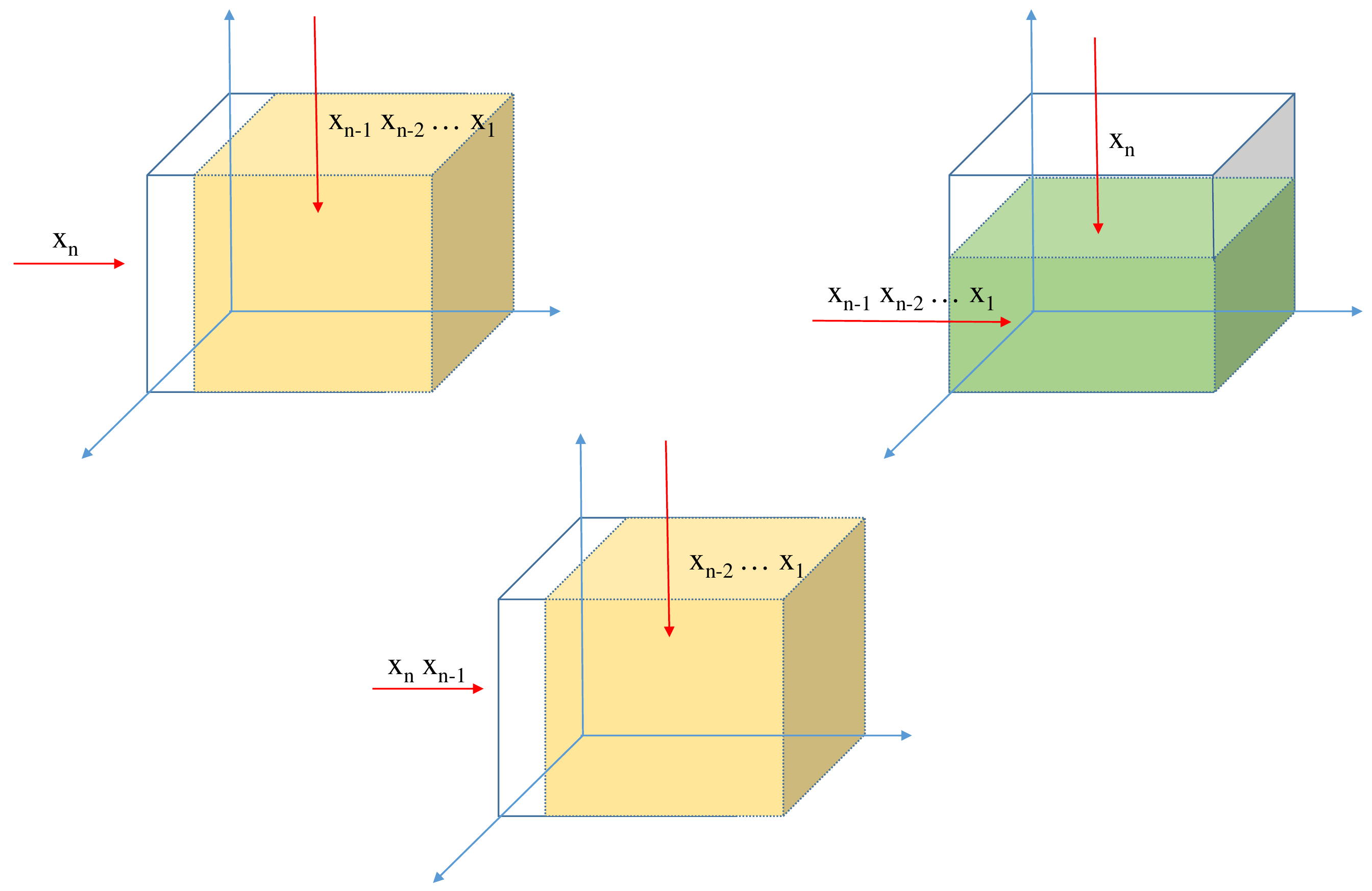}
%\caption{fig2}
\end{minipage}
}%

\centering
\caption{Multivariate Stockham FFT on GPU.}
\end{figure}

Transposition \cite{DBLP:conf/ppopp/SungGGGH14} is done after one variable has been evaluated on GPU. In this part, shared memory is fully used so that the bank conflicts and lock conflicts \cite{RGMP:optimizing} could be effectively avoided. 

As what is shown in third part of the algorithm, we move the last variable to the first position in $\vec{d}$ after evaluation and transposition, for instance, $(x_1, x_2, x_3)$ would be transformed into $(x_3, x_1, x_2)$, then $(x_2, x_3, x_1)$. This assures that all variables have been evaluated.

\subsubsection{Implementation of Stockham FFT on GPU}
The second part of the algorithm evaluates the polynomial on GPU based on the optimized algorithm above. At first, we compute the amount of data we will send to GPU according to $rowDim$ and $BDIM$, the amount of blocks enumerated and distributed to multiprocessors in a kernel grid. We note these data $perSendRow$. After that, we decide how many blocks and threads we should designate to the kernel on the basis of $perSendRow$. Here we set the maximum of $colDim$ as 8192 and note it $MAX\_FFT\_DIM$ since it is such a large degree for one variable that it could cover most of the situations in polynomial system and satisfy the demands of the most practical applications. When processing polynomial matrix on GPU, we could conclude three situations:

\begin{itemize}
  \item [(1)] 
    When $perSendRow$ is less than $TDIM$, the amount of threads in one block, we designate one thread block to evaluate the polynomial since threads in per block are sufficient to deal with input polynomial even one thread processes one coefficient. Typically, the number of threads depends on the degree of input polynomials. 
  \item [(2)]
    Under such circumstance that one block could not afford the input coefficients of the polynomial, which means $perSendRow$ is larger than $TDIM$, we designate $block\_num$ thread blocks to compute FFT where $colDim$ items could be evaluated in one block. $block\_num$ is given in Equation \ref{block_num}. As discussed in detail in multiprocessor level, the fewer registers a kernel uses, the more threads and thread blocks are likely to reside on a multiprocessor, which can improve performance. However, assigning too much active blocks may also do bad to occupancy. Thus, we calculate $\left \lfloor{MAX\_FFT\_DIM / colDim} \right \rfloor \times colDim$ items in one block where resources on GPU could be fully utilized.

\begin{equation}
\label{block_num}
   block\_num =   \left \lfloor \frac {perSendRow} {MAX\_FFT\_DIM / colDim} \right \rfloor
\end{equation}

  \item [(3)]
    When $colDim$ is so large that GPU could not offer computation even though all blocks are working, we still designate $block\_num$ blocks to kernel and the number of threads per block is selected as $TDIM$. However, all available blocks would be used more than once until $perSendRow$ items are evaluated. In other words, blocks would be used iteratively with several rounds. 
\end{itemize}

Recall that Stockham FFT is divided into three steps, which fully maps quit straightforward to the GPU. We discuss the realization of these kernels below, which is shown in Figure \ref{SituationsOfFFT}. Each step will experience these three situations.

\begin{figure}
\includegraphics[height=2.35in, width=3.85in]{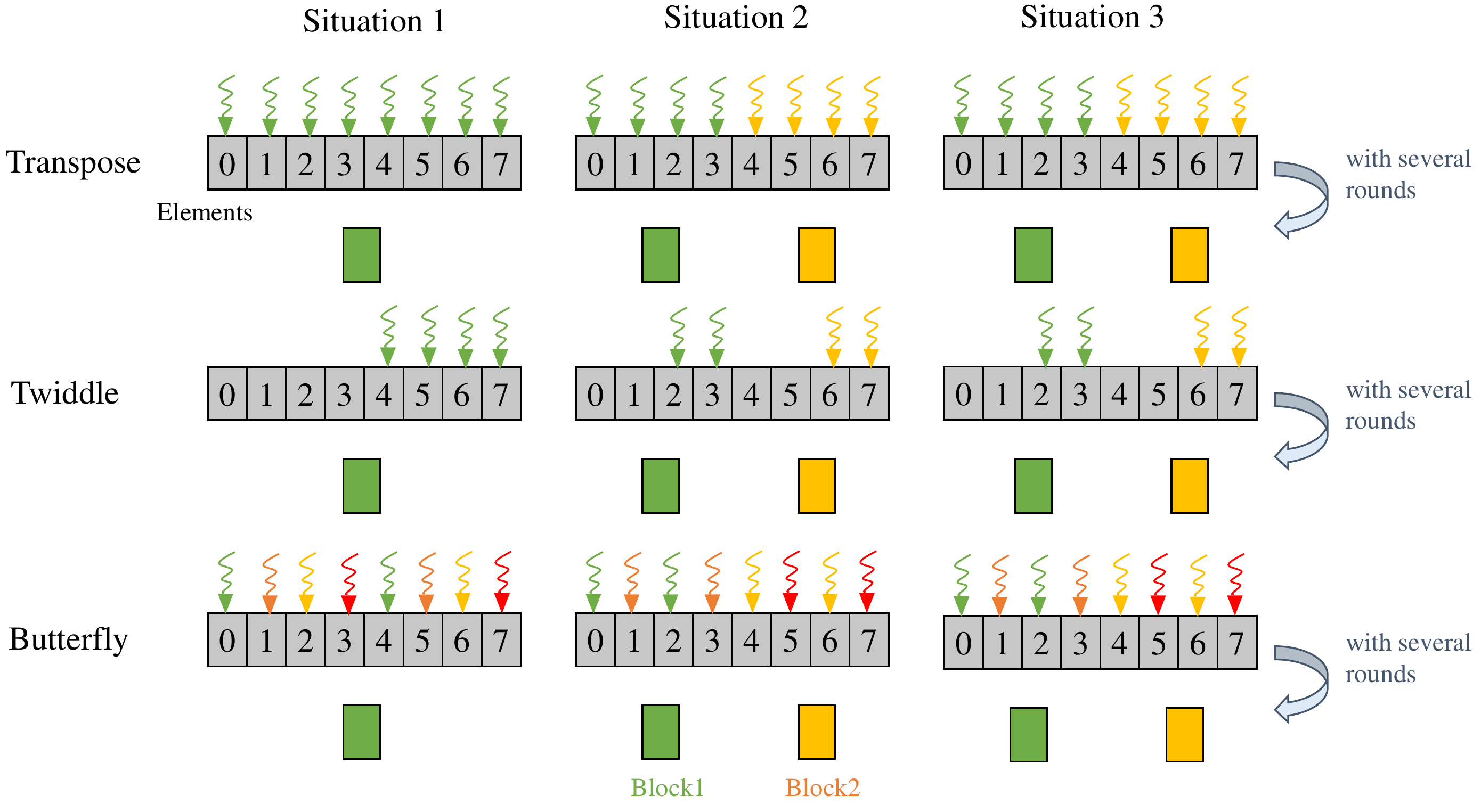}
\caption{Situations when processing Stockham FFT on GPU. In Situation 1, one block processes all elements. In Situation 2, one block processes half of the elements. In Situation 3, two blocks are used with several rounds.}
\label{SituationsOfFFT}
\end{figure}

\noindent \textbf{Stride transpose of Stockham FFT}. Transposition in Stockham FFT is similar to the one in \cite{RGMP:optimizing}. For each situation above, we use one thread process one coefficient. At first, we transfer all data into shared memory so that non-coalesced memory access such as that in global memory could be avoid. Then, we compute the position of the matrix and write the number at this position to the corresponding position of output matrix directly\cite{DBLP:conf/ppopp/CatanzaroKG14}.

%twiddle 就是乘以 \omega， 但是根据Stockham方法，FFT变成了奇数项和偶数项，且用一组插值点插值，所以，奇数偶数一起乘，所以一个线程处理两个数字%
\noindent \textbf{Stride twiddle of Stockham FFT} Twiddle is just a multiplication between coefficient of input polynomial and its corresponding evaluation point, which is based on $\omega$. Since only half of the data elements need to be processed in twiddle, the same amount of threads in one block now have the ability to deal with twice as much data as that in the Stride transpose step above. Thread workload is perfectly balanced because each thread does the same job. Finally, a compacted sequence is written back to global memory, which is provided for the butterflies in the next step.

%奇数偶数一起操作，一个加法，一个减法%
\noindent \textbf{Butterfly of Stockham FFT}. The standard way of computing a butterfly is to alternately add and subtract the results calculated by twiddle. Since Stockham FFT divides the input polynomial into two parts, we associate one thread with two data elements. By assigning threads as this way, we ensure that all threads are occupied in the procedure. We keep the modular and corresponding reciprocals (inv) needed for modular computation in constant memory space because one thread block uses a single modulus throughout all computations. Accordingly, all threads of a block read the same value and the data is loaded via constant memory cache. Besides, direct reference to the data in constant memory has a positive effect on reducing register pressure.

To sum up, we would like to add that all steps of our evaluation algorithm discussed above execute in $O(\log n)$  parallel time on the GPU with $n$ threads, thereby making the final complexity also in linear parallel time.

%根据数据量大小决定每次送多少进去，根据每次送多少进去决定每个过程（butterfly，stride，transpose）调用多少block和thread
%线程访问每一种情况下的图
%

%Yet before going through the realization details, we give an overview of some common optimization techniques we have applied to improve the performance. As discussed in detail in Multiprocessor Level, the fewer registers a kernel uses, the more threads and thread blocks are likely to reside on a multiprocessor, which can improve performance.
%Below, we consider the realization of the optimized Condensation method.

\subsection{Numerical determinant in  $\mathbb{Z}/p\mathbb{Z}$}
In what follows, we will consider implementation of GPU kernel evaluating numerical determinants on each interpolation node, which is shown in Algorithm \ref{alg:condensation_GPU}. 

The procedure consists of two parts with the difference that in lines 4-7 we use one unique thread to find the first number from left to right unequal to 0 as well as its index in the first "uncalculated" row while in lines 9-12 the rest rows of matrix would be updated according to the number we record simultaneously. Theoretically, we could record first non-zero number corresponding to one specific row, and if not, the determinant is 0. For "uncalculated", we mean the given row that we have not recorded its non-zero number and its index.

\begin{algorithm}[htb]  
\caption{Condensation method optimized on GPU} 
\label{alg:condensation_GPU} 
\begin{algorithmic}[1] 
%\State \textbf{procedure} DET\_GPU$(vn, r, p_i)$
\Procedure{DeterminantOnGPU}{$r, p_i$}
\State let $tid = threadIdx.x + blockIdx.x \times blockDim.x$,
%\State let $detNum = TDIM / r$,
%\State \Comment{the number of matrix in one block} 
\For{$i=0$ to $r-1$}
\If{(tid \% r) == 0}
\State we find first number that is not 0, then store 
\State  its value as $z_j$ and its index $Id_j$ into shared
\State  memory respectively,
\EndIf
\State \_\_syncthreads,
\If{(tid \% r) > i}
\State update number before $Id_j$ of each row,
\State set $Id_j$ of each row as 0,
\State update number after $Id_j$ of each row,
\EndIf
\EndFor
\State we compute the product of $D_z=z_0z_1\cdots z_{r-1}$,
\State \_\_syncthreads,
%\Comment{The gcd is b} 
\EndProcedure 
%\State \textbf{end procedure}
\end{algorithmic} 
\end{algorithm}

Between first part and second part is a garden variety thread barrier which is called synchronization where any thread reaching the barrier waits until all of the other threads in that block also reach it. The first non-zero number $z_j$ and its index $Id_j$ of each "uncalculated" row of respective matrix is stored in the shared memory so that all the threads in the block can have access to them. For the second part, we have three steps:

\begin{itemize}
  \item [(1)] 
    First, we update number before index $Id_j$. For $i=0$ to $Id_j$ and $j=0$ to $r-1$, we set $M_{j,i}$ as $mul\_mod(-M_{j,i}, n_j, p)$ where $mul\_mod$ is the computation of product of two digits $x,y$ in prime field $\mathbb{Z}/p\mathbb{Z}$.
  \item [(2)]
    Then, we note the value of $M_{j,Id_j}$ as $t$ before setting $M_{j,Id_j}$ as zero.
  \item [(3)]
    In the end, we update number after index $Id_j$. Similar with $mul\_mod$, $sub\_mod$ is the computation of subtraction of two digits $x,y$, meaning $x-y$ in prime field $\mathbb{Z}/p\mathbb{Z}$. For $Id_j$ to $r$, we calculate $s_1$ as $mul\_mod(z_j, M_{j,i}, p)$ and $s_2$ as $mul\_mod(M_{j,i}, t, p)$, and then, we set $T_i$ as $sub\_mod(s_1, s_2, p)$. 
\end{itemize}

Virtually, the steps above have the same function as matrix elementary transformation but its frequency plunges under our method. Furthermore, it is parallel and completely in-place in our implementation. This process sets all the numbers that below the non-zero number with the same index $Id_j$ as 0 in the meantime the remaining numbers in matrix updated respectively. Here one thread now processes one "uncalculated" row at a time, thus numbers such process involves are updated simultaneously. With more rows become "calculated", the matrix are completely updated and it could be transformed into an upper triangular matrix according to elementary transformation rules though there is no need to do so. Therefore, the determinant $D$ is the product of all non-zero number in $\mathbb{Z}/p\mathbb{Z}$: $z_0 z_1 \cdots z_{r-1} \mod p$.

Observe that, the number of working threads decreases during the process. Therefore, we use a load balancing strategy to improve thread occupancy: when at least half the threads enter the idle state, we switch to another code subroutine where computations are organized in a way that threads only do half a job. Eventually, once the size of numbers which waits to be updated descends below the warp boundary, the remaining algorithm steps are run without thread synchronization because warp, as a minimal scheduling entity, always executes synchronously on the GPU. 
%负载均衡策略: 当工作线程数下降的时候，原来一个线程处理一行，现在两个线程处理一行%

The high memory bandwidth of GPUs makes them attractive to accelerate determinant computation. In detail, the peak for PCI-ex16 is 4GB/s versus 80GB/s peak for Quadro FX5600\cite{DBLP:conf/ppopp/SungGGGH14}. Hence, we transfer as much data as possible from CPU to GPU according to the size of the global memory. It should also be taken into account that more active threads should be put on a Streaming Multiprocessor(SM) to get a comparatively higher occupancy. Yet bear in mind that higher occupancy does not always equate to higher performance while low occupancy always interferes with the ability to hide memory latency, resulting in performance degradation. Thus, we evenly distribute data so that each SM could become load balanced and relatively more blocks can be used. 

Besides, the efficiency of this algorithm largely depends on how good the distribution of threads in per block is. Performing fast numerical determinant computation is not an easy task to accomplish because the graphics hardware is heavily optimized for coalesced memory access while reading and writing may be divided into several instructions by compiler due to misaligned or non-coalesced memory access. Divided into 3 steps when dealing with one unique row of matrix on GPU makes it harder to have access to coalesced memory, which directly contributes to lower load efficiency. Nevertheless, the high throughout and occupancy compensates the defect and the determinant could be calculated just in a few microseconds.

\subsection{IFFT in  $\mathbb{Z}/p\mathbb{Z}$}
Recall that interpolation nodes $(\vec{\omega }_i, D(\vec{\omega }_i))$ are generated in numerical determinant computation above. IFFT is used to calculate the interpolating polynomial on these data points in $\mathbb{Z}/p\mathbb{Z}$. Generally, there is exactly one polynomial due to the uniqueness of the polynomial interpolation which is proven in theorem \ref{uniquess of polynomial}.  

\begin{theorem}
\label{uniquess of polynomial}
Given $n$ points ${(x_0, y_0), (x_1, y_1), \dots , (x_{n-1}, y_{n-1})}$ with unequal $x_k$, there is at most one polynomial $A$ of degree less or equal to $n-1$ such that $y_k = A(x_k), k = 0, 1, \dots, n-1$.

\end{theorem}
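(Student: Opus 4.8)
The plan is to argue by contradiction through the difference of two interpolants. Suppose both $A$ and $B$ are polynomials of degree at most $n-1$ satisfying $y_k = A(x_k) = B(x_k)$ for every $k = 0, 1, \dots, n-1$. I would then set $C = A - B$ and observe that $C$ is again a polynomial of degree at most $n-1$, while $C(x_k) = A(x_k) - B(x_k) = 0$ at each of the $n$ nodes. Since the $x_k$ are pairwise distinct by hypothesis, $C$ possesses $n$ distinct roots.

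The heart of the argument is the bound that a nonzero polynomial of degree at most $n-1$ over a field has at most $n-1$ roots. I would establish this by the factor theorem: if $C(x_0) = 0$ then $(x - x_0)$ divides $C$, so $C = (x - x_0)\,C_1$ with $\deg C_1 \le n-2$, and iterating over the remaining nodes --- using that distinct $x_k$ yield distinct linear factors --- gives $C = \lambda \prod_{k=0}^{n-1}(x - x_k)$ for some scalar $\lambda$, which forces $\deg C \ge n$ unless $\lambda = 0$. Confronted with $\deg C \le n-1$, the only possibility is $C \equiv 0$, whence $A = B$ and uniqueness follows.

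The main obstacle, and the only place where the field structure genuinely enters, is the factor-theorem step: the inductive peeling of linear factors relies on the absence of zero divisors (so that degrees add under multiplication and a product of nonzero factors stays nonzero) and on divisibility by distinct linear terms. In the present setting we always work over $\mathbb{Z}/p\mathbb{Z}$ with $p$ prime, which is a field, so these prerequisites are met automatically; I would note this explicitly so that the lemma transfers cleanly to the modular evaluation--interpolation framework of the preceding sections.

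An alternative I would keep in reserve recasts the $n$ interpolation conditions as the linear system $V\vec{a} = \vec{y}$, where $\vec{a}$ collects the unknown coefficients of $A$ and $V = (x_k^{\,j})_{0 \le k,j \le n-1}$ is the Vandermonde matrix. Its determinant equals $\prod_{0 \le i < j \le n-1}(x_j - x_i)$, which is a unit in the field precisely because the $x_k$ are distinct; invertibility of $V$ then gives uniqueness of $\vec{a}$ directly. I would prefer the difference-polynomial route as the primary proof, since it is shorter and avoids evaluating the Vandermonde determinant, but the linear-algebraic version has the advantage of simultaneously foreshadowing existence of the interpolant.
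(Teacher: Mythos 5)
Your primary argument is correct, but it takes a genuinely different route from the paper. You prove uniqueness by forming the difference $C = A - B$, observing that it is a polynomial of degree at most $n-1$ vanishing at $n$ distinct nodes, and invoking the factor theorem (valid over a field, hence over $\mathbb{Z}/p\mathbb{Z}$) to conclude $C \equiv 0$. The paper instead uses exactly the linear-algebraic argument you keep ``in reserve'': it writes the interpolation conditions as $V\vec{a} = \vec{y}$ with $V$ the Vandermonde matrix, cites the determinant formula $\prod (x_k - x_j)$, and concludes that distinct nodes make $V$ invertible, so the coefficient vector $\vec{a} = V^{-1}\vec{y}$ is uniquely determined. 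The trade-off is as you describe: your root-counting proof is shorter, more elementary, and does not need the Vandermonde determinant evaluation, whereas the paper's version simultaneously establishes existence of the interpolant (which is actually what the surrounding IFFT discussion relies on --- the paper asserts ``there is exactly one polynomial,'' a slightly stronger claim than the theorem's ``at most one''). One small point of care in your write-up: the step ``$C = \lambda \prod_{k}(x - x_k)$ \ldots forces $\deg C \ge n$ unless $\lambda = 0$'' is phrased loosely, since peeling $n$ linear factors from a polynomial of degree at most $n-1$ already yields a contradiction before you reach a scalar cofactor; the cleaner statement is simply that a nonzero polynomial of degree at most $n-1$ over a field has at most $n-1$ roots, which is the lemma you correctly identify as the heart of the matter.
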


\begin{proof}[\textbf{Proof}]
As we all know, a polynomial can be presented as:

\begin{equation}
\left[
\begin{array}{ccccc}
1 & x_0 &  x_0^2    & \cdots & x_0^{n-1}      \\
1 & x_1 &  x_1^2    & \cdots & x_1^{n-1}      \\
\vdots &  \vdots & \vdots & \ddots & \vdots \\
1 & x_{n-1} &  x_{n-1}^2    & \cdots & x_{n-1}^{n-1}      
\end{array}
\right ]
\left[
\begin{array}{cccc}
 a_{0}\\
 a_{1}\\
 \vdots \\
 a_{n}
\end{array}
\right ]
=
\left[
\begin{array}{cccc}
 y_{0}\\
 y_{1}\\
 \vdots \\
 y_{n}
\end{array}
\right ]
\end{equation}
where the left matrix could be represented as $V(x_0, x_1, \dots, x_n)$ called $Vandermonde$ matrix and its determinant is:

\begin{equation}
\prod_{0<j<k<n-1} (x_k-x_j)
\end{equation}

According to the theorem that a matrix is singular if and only if its determinant is 0, we can conclude that the matrix is invertible if $x_k$ is unequal. Therefore, given $(x_k, y_k)$, we can exactly generate one polynomial with coefficients $\vec{\alpha }$ :

\begin{equation}
\vec{\alpha } = V(x_0, x_1, \dots, x_n)^{-1}\vec{y }
\end{equation}

\end{proof}

We omit the implementation of IFFT since it closely resembles that of the FFT on GPU considered above but with the opposite sign in the exponent and a $1/N$ factor. At this point,  the generated IFFT  result is a unique polynomial in $\mathbb{Z}/p\mathbb{Z}$, which is also the exact determinant of the original polynomial matrix $M$ but in $\mathbb{Z}/p\mathbb{Z}$.

\subsection{Chinese reminder theory in  $\mathbb{Z}/p\mathbb{Z}$}
IFFT provides the precise determinant $Q(p_i)$ over $\mathbb{Z}/p\mathbb{Z}$ with respective prime $p_i$ for $i=0,1 \dots, n-1$. Since $p_i$ are pairwise coprime, it is nature to use Chinese remainder theorem to reconstruct the integer coefficient of the determinant over $\mathbb{Z}$. However, compared with conventional Chinese remainder theorem, Mixed-Radix conversion(MRC) algorithm has the property that computation could be finished without resorting to multi-precision arithmetic, which become decisive for choosing this algorithm in the realization on the GPU instead of conventional method \cite{DBLP:journals/jpdc/Emeliyanenko13}.

More precisely, for a given set of primes $(p_0, p_1, \dots, p_{n-1})$ and respective residues $(x_0, x_1, \dots, x_{n-1})$, which is composed by the coefficients of $Q(p_i)$ with the same term, the MRC algorithm \cite{MMRCRNSAA}associates the large integer $X$ with mixed-radix(MR) digits ${\alpha_i}$ in the following way:

\begin{equation}
\label{eq:X}
 X=\alpha_0 m_0 + \alpha_1 m_1 + \cdots + \alpha_{n-1} m_{n-1}
\end{equation}
where $m_0 = 1, m_j=p_0 p_1 \cdots p_{n-1}(j=1,\dots,n-1)$. With set of digits ${\alpha_i}$, large integer $X$ can also be easily obtained by simply evaluating a Horner's rule:

\begin{equation}
\label{eq:horner}
 X=\alpha_0 + m_0 ( \alpha_1 + m_1 (\alpha_2 + m_2 (\cdots + \alpha_{n-1})\cdots)).
\end{equation}
For $i = 0,1,\dots,n-1$, $\alpha_i$ could be evaluated as follows:

$\alpha_0 = x_0, \alpha_1 = (x_1 - \alpha_0) c_1 \bmod p_1,$

$\alpha_2 = ((x_2-\alpha_0) c_2 - (\alpha_1 m_1 c_2 \bmod p_2)) \bmod p_2, \dots$

$\alpha_i = ((x_i-\alpha_0) c_i - (\alpha_1 m_1 c_i \bmod p_i) - \cdots - (\alpha_{i - 1} m_{i-1} c_i)) \bmod p_i $
where $c_i = (p_0 p_1 \cdots p_{i-1})^{-1} \bmod p_i$.

\begin{algorithm}[htb]  
\caption{Chinese remainder theorem on GPU} 
\label{alg:CRT_GPU} 
\begin{algorithmic}[1] 
\Procedure{CrtOnGPU}{$\vec{p}, pn, \vec{xs}, \vec{as}, \vec{m}, \vec{c}$}
\State let $tid = threadIdx.x + blockIdx.x \times blockDim.x$
\State $x = xs + tid \times rPitch$
\State $a = as + tid \times aPitch$
\State store $\vec{m}, \vec{c}$ into shared memory as $\vec{ms}, \vec{cs}$
\State let $a_0 = x_0$
\For{$i=1$ to $pn$}
\State $t1 = sub\_mod(x_i,a_0,p_i)$
\State $t2 = mul\_mod(t1,cs_i,p_i)$
\State $t3 = 0$
\For{$j=1$ to $i-1$}
\State $t4 = mul\_mod(a_j, ms_j, p_i)$
\State $t4 = mul\_mod(t4, cs_i, p_i)$
\State $t3 = add\_mod(t3, t4, p_i)$
\EndFor

\State  $a_i = sub\_mod(t2, t3, p_i)$
\EndFor

\EndProcedure 
\end{algorithmic} 
\end{algorithm}

Although it is hard to be paralleled since the calculation of ${\alpha_i}$ strongly depends on the calculation of ${\alpha_j}$ for $j < i$, we still could conceive a simple parallel algorithm updating ${\alpha_{i+1}}$ because there are still large number of similar computations such as $\alpha_{i - 1} m_{i-1} c_i$. 
%Sadly, this solution shows very bad occupancy because the number of working threads decreases in each iteration. 

We have also observed that the number of residues $x_i$ and primes $p_i$ is far less than the number of coefficient in $D_s$, the determinant of the original symbolic matrix. Hence, parallel computing on the coefficients in $D_s$, the MR digits of each large integer coefficients more precisely, is more attractive and effective than parallel computing on a single large integer coefficient. 

Our GPU implementation is shown in Algorithm \ref{alg:CRT_GPU}. $\vec{xs}$ regards as residues, which is the coefficients of $Q(p_i)$ over $\mathbb{Z}/p\mathbb{Z}$. $\vec{m}, \vec{c}$ regards as $m$ and $c$ in Equation \ref{eq:X} and Equation \ref{eq:horner} respectively. We use serial method calculate one coefficient and parallel one computing the rest. Line 6-14 explicitly states the expression in Equation \ref{eq:horner} where one thread is used to calculate one large integer coefficient, in other words, the ${\alpha_i}$ of each large integer coefficient is calculated simultaneously. Also, we precompute the $\vec{m}$ and $\vec{c}$ in advance and transfer all of them into global memory for the moment. In the GPU kernel, $\vec{m}$ and $\vec{c}$ will be stored in the shared memory, a small on-chip memory with low latency so that all threads in one block share this address space and the data could be reused with fast access. It is unnecessary and not wise to put residues $\vec{xs}$ on shared memory since its storage on chip is limited, and thus, might not be large enough to keep all residues. Meanwhile, the more the amount of shared memory is used, the less the number of blocks become active, which will likely lead to low occupancy. To display large integer coefficients from mixed-radix digits on the host machine, we have employed the GMP 6.1.2 library \footnote{http://gmplib.org.}.

\section{Experiment}
Experiments in this section have been performed on one current NVIDIA device, using integer version of the algorithms. NVIDIA GeForce 2080 Ti with Turing architecture has a peak bandwidth 616GB/s, with 68 streaming multiprocessor and the total number of 4352 CUDA cores. First, we focus on the running time and memory overhead during the process, and then, we discuss the feasibility of our algorithm by analyzing available primes we could use.

\begin{figure*}[htbp]
\centering
\subfigure[Running time on GPU versus Maple.]{
\begin{minipage}[t]{0.5\linewidth}
\label{running_time_1}
\centering
\includegraphics[width=3.0in , height= 2.0in]{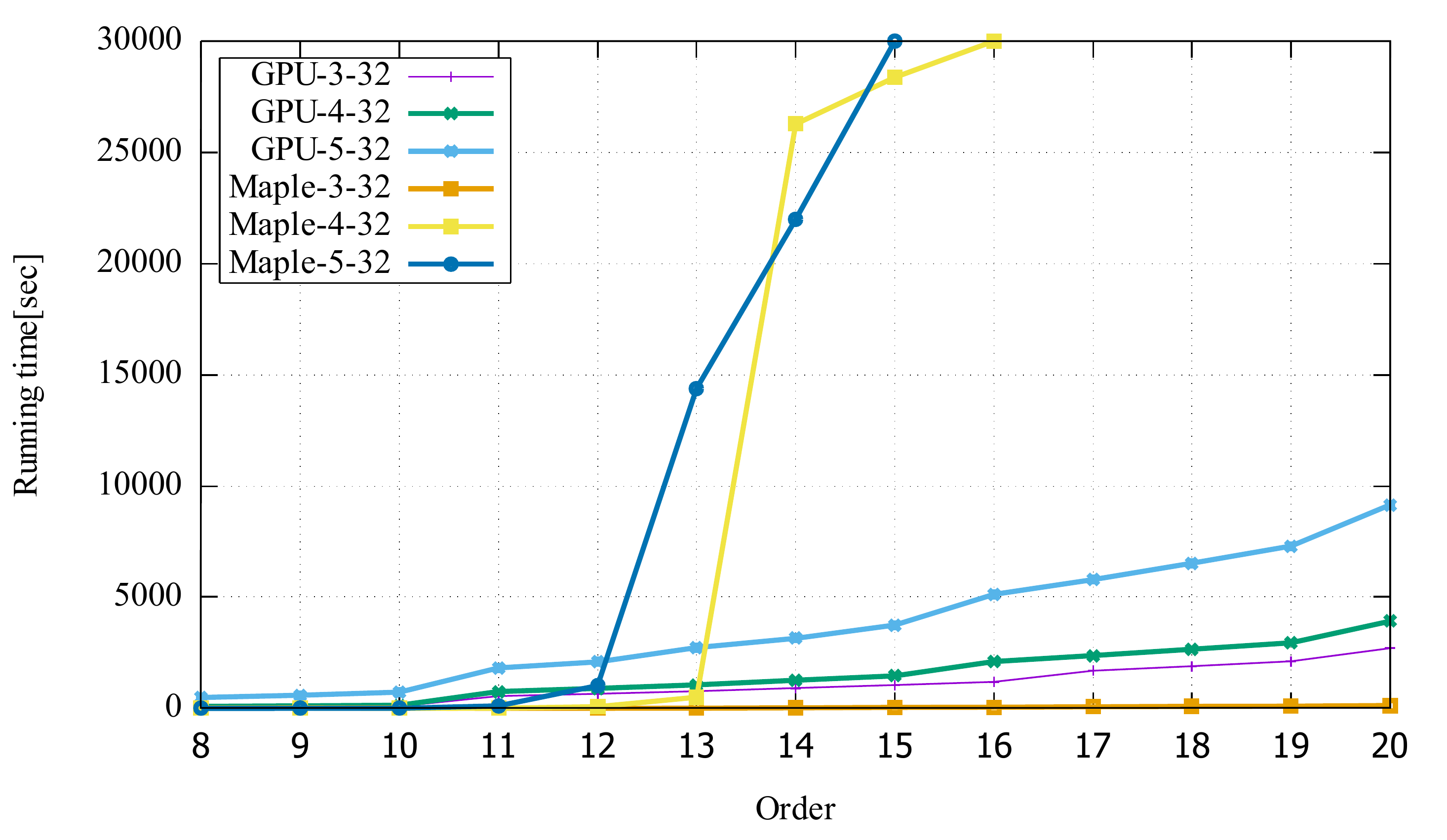}
%\caption{fig1}
\end{minipage}%
}%
\subfigure[Memory overhead on GPU versus Maple.]{
\begin{minipage}[t]{0.5\linewidth}
\label{memory_overhead_1}
\centering
\includegraphics[width=3.0in ,height= 2.0in]{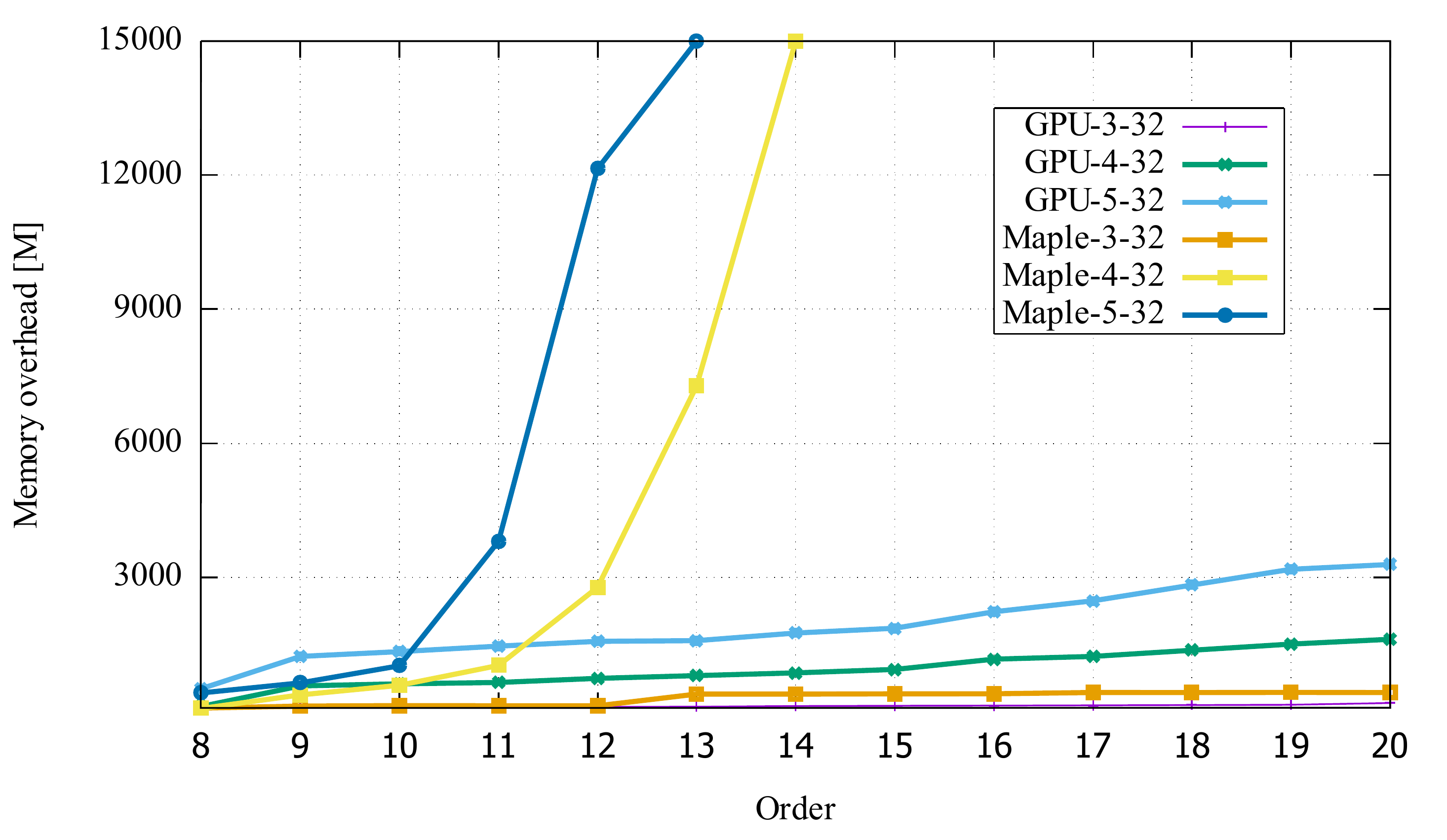}
%\caption{fig2}
\end{minipage}%
}%
\centering
\caption{Running time and memory overhead comparison. The max degree of each variate is assigned as 32 while the amount of variate is 3, 4, 5 for matrix whose order is from 8 to 20. Source: Various data are taken from Table \ref{tab:exp_data_1}. For example, "GPU-3-32" means matrix with 3 variates whose max degree are all 32 is running on GPU} %Left: running time on GPU versus the one on Maple. Right: memory overhead on GPU versus the one on Maple.
\label{exp_1}
\end{figure*}

\begin{figure*}[htbp]
\centering
\subfigure[Running time on GPU versus Maple.]{
\begin{minipage}[t]{0.5\linewidth}
\label{running_time_2}
\centering
\includegraphics[width=3.0in , height= 2.0in]{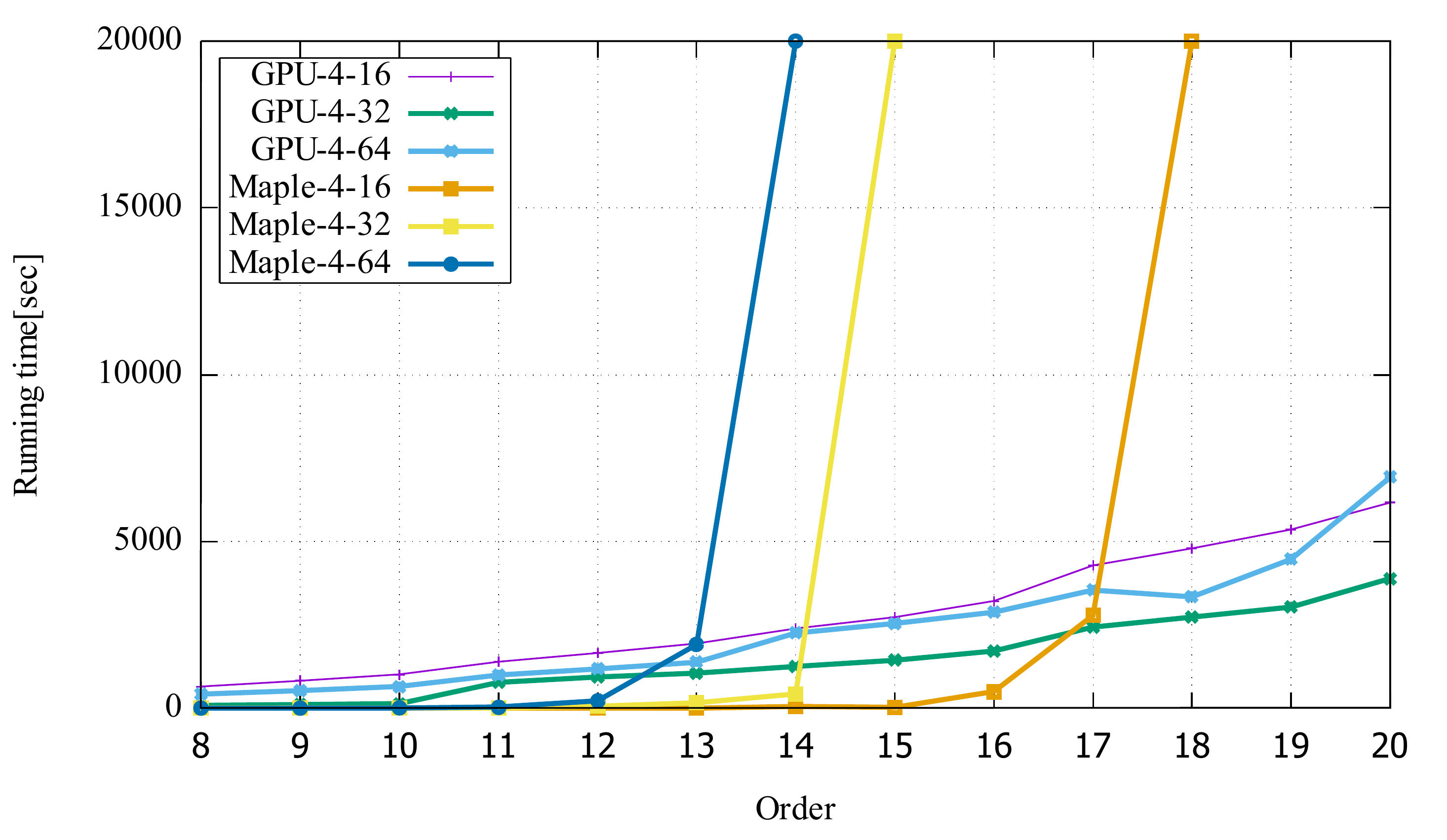}
%\caption{fig1}
\end{minipage}%
}%
\subfigure[Memory overhead on GPU versus Maple.]{
\begin{minipage}[t]{0.5\linewidth}
\label{memory_overhead_2}
\centering
\includegraphics[width=3.0in ,height= 2.0in]{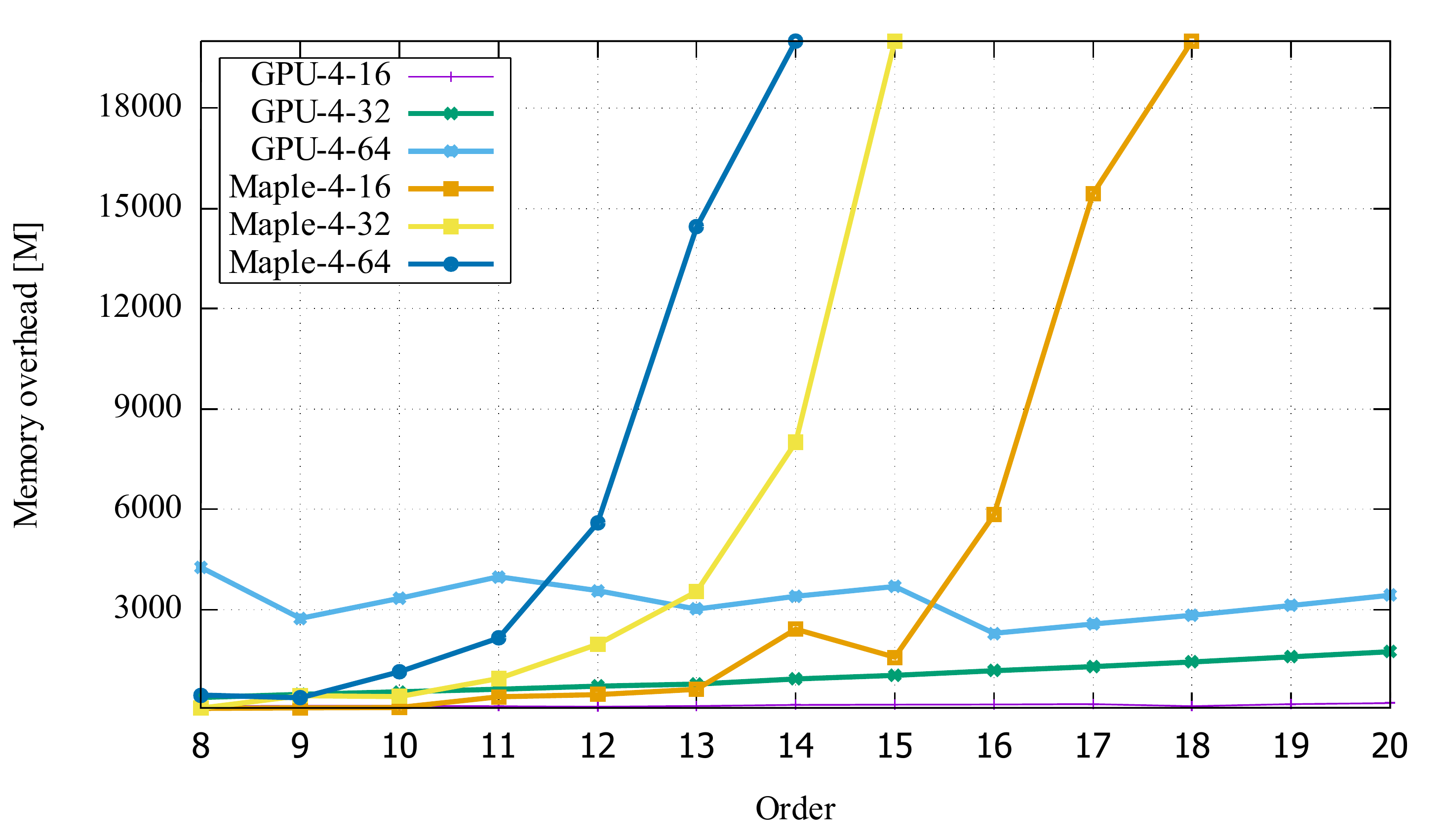}
%\caption{fig2}
\end{minipage}%
}%
\centering
\caption{Running time and memory overhead comparison. The account of variate is limited into 4 while max degree of each variate is 16, 32, 64 for matrices with same order. Source: Various data are taken from Table \ref{tab:exp_data_2}.} % Left: running time on GPU versus the one on Maple. Right: memory overhead on GPU versus the one on Maple. 
\label{exp_2}
\end{figure*}

\subsection{Running time and Memory}
We compare our implementation of multivariate polynomial determinant over big prime field for $order$ between 8 to 20 on GPU against a comparable approach based on a host-based determinant algorithm from 64-bit Maple 18, which provides a very efficient built-in determinant implementation by using Gaussian elimination. For polynomials of each matrix, we assign different amount of variables with various of degree as well as integer coefficients ranging from -100 to 100. Moreover, we need big primes that are over 9 bits in our GPU implementation and we also need at least two big primes because constructing prime field with single prime is meaningless for large scale computing.

The running time and memory overhead has been measured with two different multivariate polynomial matrix input groups. For matrix with different orders, we first set the max degree of each variate as 32 while the amount of variate is 3, 4 and 5. Experimental results are gathered in Table \ref{tab:exp_data_1} in Appendix A and shown in Figure \ref{exp_1}. We also have implemented and tested on GPU and Maple with inputs owning 4 variates whose max degree is limited as 16, 32, 64. Experimental results are gathered in Table \ref{tab:exp_data_2} and shown in Figure \ref{exp_2}.

Figure \ref{running_time_1} and Figure \ref{running_time_2} shows the running time of our GPU algorithm based on big prime field measured on NVIDIA Geforce 2080Ti compared to the Maple internal determinant method with different inputs. For both experimental results, our GPU approach goes through a process of stationary increment for all instances we have tried, being smooth and steady. This behavior is expected since, with increasing matrix degree, the increasing data triggers the rise on the number of thread blocks, which could be large enough to keep the device busy, leading to better hardware utilization. Besides, we can see that the optimized determinant method realizing on NVIDIA device results in impressive speedups especially when orders are over 14 or 15, which indicates that our approach would scale well on GPUs with large number of CUDA cores.

The original Maple determinant algorithm has outperformed our GPU method in some cases such as orders between 8 to 12 in Figure \ref{running_time_1} and 8 to 11 in Figure \ref{running_time_2}, which shows that our GPU approach may be less effective for lower order or lower degree matrix. However, some significant performance drops are noticeable when increasing the matrix size according to the test results on Maple. For instance, there are several jumps for orders between 12 to 15 in Figure \ref{running_time_1} and orders between 12 to 17 in Figure \ref{running_time_2}. This is because matrices with much higher orders or degree amplify the arithmetic intensity of computations, but Maple lacks the ability to afford it.

Figure \ref{memory_overhead_1} and Figure \ref{memory_overhead_2} shows the memory overhead during the process in computing polynomial determinant. Similar to experiments on running time above, Figure \ref{memory_overhead_1} summarizes the difference on memory with different amounts of variates and same degree compared to Figure \ref{memory_overhead_2} on memory with same amount of variates but different degree. 

Analyzing the trend of memory fluctuations in both situations, we have observed that Maple reaches hardware saturation much faster than our GPU method. For matrix whose order is over 15 or 16 approximately, we report the memory use and running time of Maple at the time when memory is exhausted, accompanying with the break off of the calculation. Meanwhile, decrease of the reported running time during these orders is reliable since expanding matrix size contributes to equivalent memory use in a much shorter time. That exposes the limitation of Maple. As it entails nearly 100\% or 200\% memory overhead which is presented in the $M\_Maple$ column in Table \ref{tab:exp_data_1} and Table \ref{tab:exp_data_2}, its main drawback is that it is not suitable for applications in which the memory resources are constrained.

%This is caused by ascending complexity of the matrix, due to increase on degrees or quantity of variates which results in significant expansion of polynomial matrix such as matrix with order 14 or 15.  

On the contrary, our algorithm relies on thread-level parallelism and possesses a steady growing rate, which points out that our computation does not go back on as much memory overhead as that is for Maple, showing much more powerful computation capability. It can be noticed that using Maple for larger order or denser matrices would only result in memory soaring overall performance while our optimized GPU approach has memory overhead approximately less than 22\% during the experiment thanks to the on-chip memory for temporary storage. The reason why our GPU method is more advantageous than Maple is not only splitting the whole computation into 4 stages, which makes memory allocation transparent and manageable, but the modular method allows transformation from symbolic computation to numerical computation available, which is crucial for large-scale computing on GPU. Therefore, determinant for larger matrices that could not be obtained through Maple due to much more complex computation will be available under our GPU computation routine.

%Therefore, our GPU parallel approach has the additional advantage that it can compute determinant for larger matrices, as it needs a negligible memory overhead comparing to Maple.

The sparsity and density is also crucial to performance when computing polynomial determinant. Recall that we extend each polynomial so that the degree of each variate reaches the one that is the first power of 2 larger than the original value, it is noticeable in Figure \ref{running_time_2} that the degree of each variate from matrices composing line "GPU-4-16" in Table \ref{tab:exp_data_2} is much closer to 16 while those from matrices composing "GPU-4-32" and "GPU-4-64" is far away from 32 and 64 respectively. In other words, "GPU-4-16" is much denser than "GPU-4-32" and "GPU-4-64". Thus, it explains why "GPU-4-16" costs more time even though it has lower degree than the others, which indicates that our parallel algorithm on GPU is also sensitive to sparsity and density of the matrix.

%Figure memory 同样增长平稳，涨幅小。。意味着计算能力更强， 不依赖于硬件，能算maple不能算的
%Figure time 增长平稳。。。涨幅小 The new version of the transpose 100! results in impressive speedups compared to Sung’s original implementation on NVIDIA devices. 

\begin{figure}
\includegraphics[width=3.2in, height=1.85in]{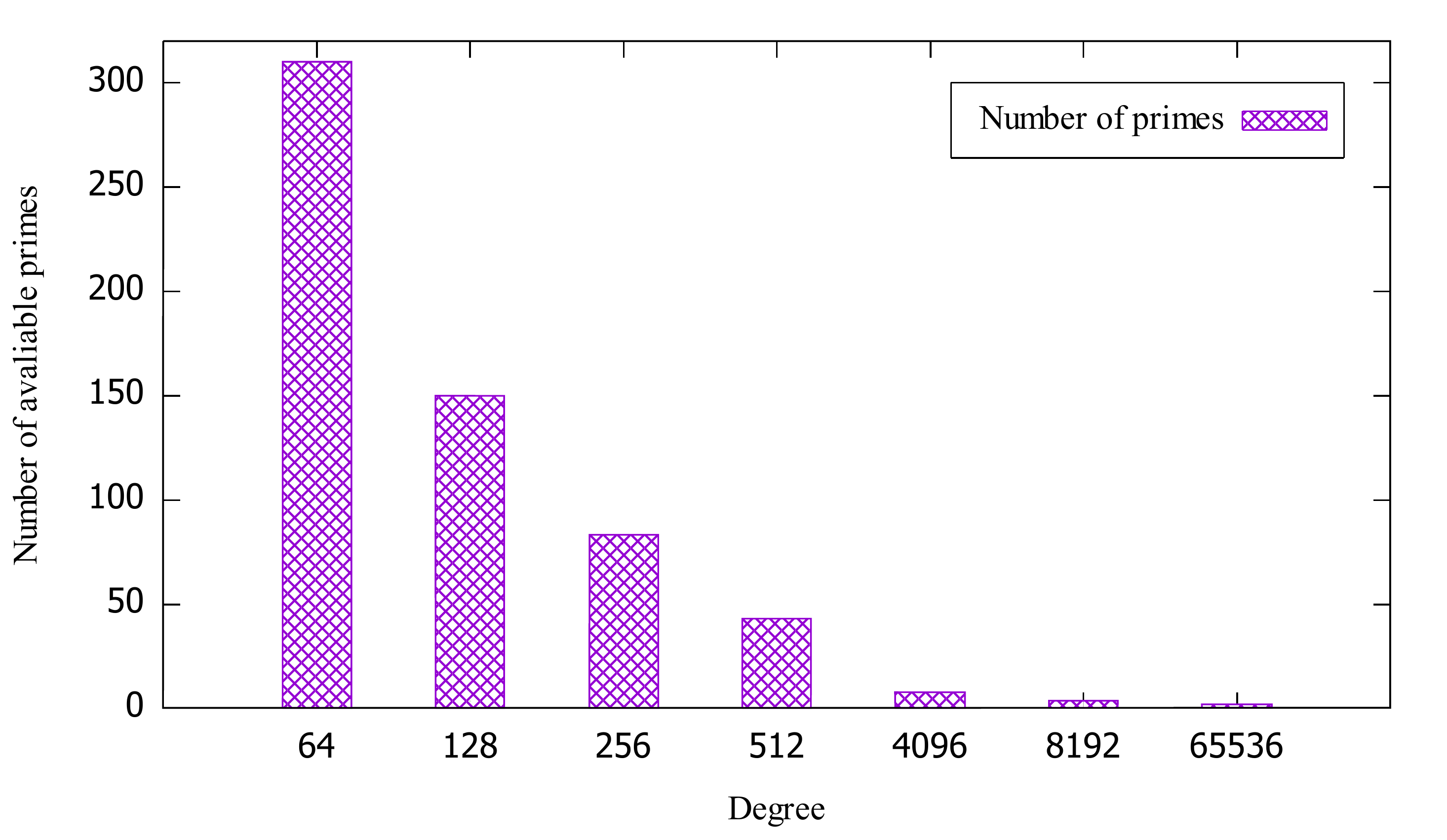}
\caption{The number of primes which own 64, 128, 256, 512, 4096, 8192 and 65536 primitive roots within the first 10000 primes over $1*10^9$.}
\label{available_primes}
\end{figure}

\begin{table}
\centering
\begin{tabular}{|c|c|c|}
\hline
% after \\: \hline or \cline{col1-col2} \cline{col3-col4} ...
Degree & Numbers of primes &  Acceleration\\
\hline
64 & 310  & 40 \\\hline
128 & 150  & 20 \\\hline
256 & 83   & 8 \\\hline
512 & 43  & 6\\\hline
4096 & 8  & 0.67 \\\hline
8192 & 4  & 0.5 \\\hline
65536 & 2  & 0.2\\\hline
\end{tabular}
\caption{The exact number of available primes we could use and the average increasement of available primes per 1000 primes.}
\label{available_primes_data}
\end{table}

\subsection{Primes we could use}
In \cite{DBLP:journals/ijfcs/MazaX11}, a balanced bivariate multiplication was proposed and held that the performance is optimal computing multi-dimensional FFT when multivariate and univariate multiplications are reduced to bivariate one whose partial degrees of the product are equal. The transformation from multivariate multiplication to bivariate one may trigger the very awareness of its performance, the problem, however, is that the degree of each of bivariate would be extremely high, a not unusual occurrence with polynomial multiplication. The observation also suggests that it is hard to find qualified primes possessing enough primitive roots of the unity, instead, the higher the degree of the variate is, the larger the primes is, and thus, the fewer the number of available primes would be. 

To confirm these assumptions, we count the number of available primes that has 64, 128, 256, 512, 4096, 8192 and 65536 primitive roots within the first 10000 primes over $1*10^9$. According to the data from Figure \ref{available_primes} and Table \ref{available_primes_data}, we could conclude that primes are abundance when the variate owns relatively lower degree such as 64 and 128, and the very paucity of available primes for variate with higher degree such as 8192 and 65536 which constructs a stark contrast compared to the former one and the complexity of the process detecting them make it difficult for FFT to process multiplication. Although we just consider the first 10000 primes over $1*10^9$, it is convinced that more primes would be collected by variates with lower degree while fewer one by its counterpart. 

%带来的问题就是找质数慢，可能会因为degree太高找不到，并且质数会超过9位，超过9位GPU就会算得慢。 我们的方法就不会，虽然会做多次FFT，但可以用的质数多，而且小，寻找快并且算得快，以此在保证计算可以完成的情况下缩短了计算时间。
Therefore, a balanced bivariate method for FFT may encounter potential difficulty that it will spend more time finding sufficient primes constructing prime field. For instance, when a 4-variates polynomial whose degree is 256 respectively is contracted into bivariate one whose degree is $256*256$, that is 65536, without losing value, only 2 possible primes of first 10000 could be used for prime field while the possibilities could be chosen is 310 when the degree of one of the variates is 64 and 150 when the degree is 128. In addition, it is much easier for bivariate with high degree to use primes over 9 digits, which is unfriendly and may compute slower on GPU. Multivariate FFT computation for our GPU method would be processed for several times, yet it is much technically feasible.

\subsection{Robustness of GPU algorithm}
Thanks to our 4-stage parallel approach, the input of the post-stage depends on the results of its precursor. Besides, we compute FFT on GPU for each element of multivariate polynomial matrix one after another and the results of other stages could also be split into several parts. Therefore, we could conclude that our GPU algorithm could recover its computation at any point even though it is interrupted by some other factors. It is advantageous and even reliable especially when computation of polynomial matrix may last several hours or the input matrix is much more complicated.  

Altogether, our results present a significant performance improvement over a host-based implementation. Furthermore, we have shown that our approach is well-suited for realization on massively-threaded architectures possessing a great scalability potential because of the large degree of parallelism inherent to the modular approach. In addition, calculation could be recovered and go on without losing preciseness when it is interrupted, which demonstrates the robustness and applicability for the proposed method.

\section{Prediction of running time}
In this section, we outline the practical way how to predict running time computing multivariate polynomial determinant on GPU. Admittedly, libraries for doing polynomial system or number theory such as Maple and NTL \footnote{http://www.shoup.net/ntl} are able to compute determinant, but they do not provide a friendly environment. When processing matrix, especially for polynomial matrix with high order and degree based on libraries above, what we can do is nothing but waiting for the answer since it is such a encapsulated entity that we do not know which step it is processing at all. Besides, some of libraries will not show an exception message at the time when functions in these libraries could not afford the input polynomial matrix, which is less effective and consumes huge amount of time.

Our algorithm provides a more humanitarian approach towards the fore mentioned circumstances. Recall all stages above, we transfer the computation, which is concerned as a black box, into the one which could be considered as a white box where each stage becomes transparent. 

%Therefore, we could stop the process in time as soon as when we check the intermediate results and find it wrong. Of much importance is that we could estimate the running time to judge its efficiency. 

Experimental resullts are gather in Table \ref{tab:exp_data_1} and Table \ref{tab:exp_data_2} where column 4-7 shows relative contribution of different stages of our GPU approach to the overall time. It is understandable that the time for computing FFT is dominating since this is a key part of the algorithm. We could also conclude that the running time of polynomial determinant mostly depends on the size, degree, and density and sparsity of the chosen matrix according to the analysis above. Therefore, we could predict the running time according to the formula shown in Equation \ref{predictRunningTime} where $C_{p}$ and $r$ are the amount of prime we use and the order of the polynomial matrix respectively. $T_{e_i}$ for $0 \leq i \le r^2-1$ represents the time we use to compute FFT over one of the big prime field for each element of polynomial matrix, which is still a multivariate polynomial accurately. Sometimes the situation is that matrix is constructed by repetitious factors, a not unusual occurrence with mathematic applications such as Sylvester matrix. Consequently, it is necessary to set $\mu = \frac{k}{r^2}$ as replication factor where $k$ means the number of element that is unique in the matrix. We omit time consuming on the rest part of the algorithm since DET, IFFT and CRT weighs little.

\begin{equation}
T = C_{p}r^2 \cdot Round( \frac{\sum_{i=0}^{e_{r^2-1}}T_{e_i}}{r^2},2) \cdot \mu
\label{predictRunningTime}
\end{equation}

For instance, assuming that the input $16 \times 16$polynomial matrix has four variables whose degrees are $(30, 30, 27, 28)$ and no duplicated elements, that the $\mu$ is 1 and the average time for $T_{e_i}$ consumes about 1.36s over finite field composed by six large primes suggests we could predict the running time. The predicted time is 2088.96s, which is approximate to its real time 2107.39s, while Maple costs more than 17 hours. 

Understandably, prediction during the process is essencial especially for large scale scientific computing where computation may last for several days. It provides an approximate reference range which is instructive when judging whether or not we should continue or stop the process. It is also time-saving in verifying correctness since we could definitely drop the procedure once we find $T_{e_0}$ or $T_{e_1}$ is not equivalent to what we expect.

\section{Open problem we solve}
One of the open problems is solving harmonic elimination equations for a multilevel converter. For a three-phase system with 4 DC sources, the mathematical statement could be defined as 

\begin{equation}
  \left\{
   \begin{array}{l}
   \cos(\theta_1) - \cos(\theta_2) +	\cos(\theta_3) - 	\cos(\theta_4) = m  \\
   \cos(5\theta_1) - \cos(5\theta_2) +	\cos(5\theta_3) - \cos(5\theta_4) = 0  \\
   \cos(7\theta_1) - \cos(7\theta_2) +	\cos(7\theta_3)- \cos(7\theta_4) = 0  \\
   \cos(11\theta_1) - \cos(11\theta_2) + \cos(11\theta_3) - \cos(11\theta_4) = 0  \\
   \end{array}
  \right.
  \end{equation}
where $\theta_k$ are switching angles  and m is constant.

The resulting equation system, characterizing the harmonic content, is nonlinear, contain trigonometric terms and are transcendental in nature. Previous work in \cite{Bouhali2013SolvingHE, AUSHEM} has shown that these equations can be converted into polynomial equations, that is converting trigonometric elements of each equation to polynomial elements, which are then solved using the method of resultants from elimination theory. However, the degrees of the polynomials are large if there are several DC sources. As a result, contemporary algebra computing software tools such as Maple and Mathematica reach their computational limitations in solving the system of polynomial equations when computing the resultant polynomial of the system. Although in \cite{anasheefmc}, the theory of symmetric polynomials is exploited to reduce the degree of the polynomial equation, the computation using contemporary computer algebra software tools still appear to reach their limit when one goes to five or more DC sources. This computational complexity is because the degrees of the polynomials are large which in turn requires the symbolic computation of the determinant of large $n \times n$ matrices. 

In our GPU method, however, we skirt this hardest issue, concentrating on areas of numerical computing itself. Here we focus on a three-phase system with 9 DC sources. Based on the theory of symmetric polynomials, the original harmonic elimination equations could be rewritten into polynomial equations $(g_1, g_2, g_3, g_4, g_5, g_6, g_7, g_8, g_9)$ in terms of $(d_1, d_2, d_3, d_4, d_5,d_6, d_7, d_8, $ $ d_9)$. Then, a systematic procedure is introduced to eliminate $d_i$ and uses the notion of $resultant$. 

The $resultant$ polynomials eliminating $d_9$ could be written in the form:

%$
%\begin{matrix}
%r_1(d_1, \ldots, d_8) = Res \big(g4(d_1, \ldots, d_9), g5(d_1, \ldots, d_9), d_9 \big)  \\
%r_2(d_1, \ldots, d_8) = Res \big(g4(d_1, \ldots, d_9), g6(d_1, \ldots, d_9), d_9 \big)  \\
% \vdots   \\
%r_5(d_1, \ldots, d_8) = Res \big(g4(d_1, \ldots, d_9), g9(d_1, \ldots, d_9), d_9 \big). \\
%\end{matrix}
%$

\begin{center}
$r_1(d_1, \ldots, d_8) = Res \big(g4(d_1, \ldots, d_9), g5(d_1, \ldots, d_9), d_9 \big) $

$r_2(d_1, \ldots, d_8) = Res \big(g4(d_1, \ldots, d_9), g6(d_1, \ldots, d_9), d_9 \big)$

$\vdots$

$ r_5(d_1, \ldots, d_8) = Res \big(g4(d_1, \ldots, d_9), g9(d_1, \ldots, d_9), d_9 \big)$.

\end{center}

%\begin{equation}
%  %\left\{
%   \begin{array}{l}
%   r_1(d_1, \ldots, d_8) = Res \big(g4(d_1, \ldots, d_9), g5(d_1, \ldots, d_9), d_9 \big)  \\
%   r_2(d_1, \ldots, d_8) = Res \big(g4(d_1, \ldots, d_9), g6(d_1, \ldots, d_9), d_9 \big)  \\
%   \qquad \qquad \qquad \qquad \qquad  \; \; \vdots   \\
%   r_5(d_1, \ldots, d_8) = Res \big(g4(d_1, \ldots, d_9), g9(d_1, \ldots, d_9), d_9 \big).  \\
%   \end{array}
%%  \right.
%\end{equation}

When eliminating $d_5$, Maple and Mathematica are not able to afford the computational burden. Thus, we consider the optimized GPU method to solve this resultant. Since the determinant of Sylvester matrix is resultant, that is,
\begin{equation}
 Res \big(g5(d_1, \ldots, d_5), g6(d_1, \ldots, d_5), d_5 \big)  \triangleq det S_{g5, g6}(d_1, \ldots, d_4),
\end{equation}
we attempt to construct Sylvester matrix $S$ for $d_5$. The question now is "Given an $15 \times 15$ symbolic matrix with variables $(d_1, d_2, d_3, d_4)$ whose degrees are (298, 171, 119, 45) respectively, how to solve its determinant?".

Obviously, the determinant could be easily computed on GPU as well as get exact solution without losing accuracy. A small trick here is that since there are several duplicate terms when constructing Sylvester matrix, the FFT result over one prime could be generated just by computing FFT for each item of $g_5$ once and that of $g_6$ once instead of visiting each element of the Sylvester matrix iteratively. Finally, we take roughly 900 minutes to get the resultant which is eliminating $d_5$. 

The remaining solution for eliminating $d_i$ could be done in fashion similar to the way for $d_5$ when confronting issues that overload on CPU. Then, the procedure is to substitute the solutions of $d_i$ into $g_i$ and solve for the roots $(d_1, d_2, \ldots, d_9)$ and thus, all possible solutions for switching angles $\theta_k$ are solved.

%proliferate, ballon, steep rise, suffices

\section{Conclusions}
We have presented the design and implementation of the determinant of multivariate polynomial matrices for modern GPUs. We have enhanced both the performance of the traditional determinant algorithm as well as overall staged approach. Combined with insights that lead to greatly improved performance of elementary symbolic computation, a new parallel approach that is efficient for GPUs is presented and we have shown that this is much faster than traditional determinant computation on CPU. In addition, our algorithm owns less memory overhead as well as smooth memory increase. We have also observed that there is no accuracy loss during our process and the procedure could be continued at any point. Though some questions can be large and complex, we have provided a time prediction that helps user to estimate approximate running time. Finally, we have solved an open symbolic problem relating to harmonic elimination equations. As a future work, we plan to extend our work to multi-GPU environments. We believe that our optimized parallel determinant approach can be used as a building block for a multi-GPU version.

\begin{acks}
To Robert, for the bagels and explaining CMYK and color spaces.
\end{acks}

%%
%% The next two lines define the bibliography style to be used, and
%% the bibliography file.
\bibliographystyle{ACM-Reference-Format}
\bibliography{sample-base}

%%
%% If your work has an appendix, this is the place to put it.
\appendix

\appendix
%Appendix A
\section{Appendix A}
We provide detailed results for experiments above, including accurate running time and memory overhead for each steps of our GPU-based parallel method and those on Maple.

Table \ref{tab:exp_data_1} and Table \ref{tab:exp_data_2} gathers running times for computing multivariable polynomial determinant both on GPU and CPU measured by second as well as memory overhead measured by megabytes with different inputs. In GPU column, we account for all stages of our optimized algorithm including recovering the large integer coefficients and presenting the result on CPU. We have also listed time use of each stage in detail. Similarly, memory overhead is reported in Table \ref{tab:exp_data_1} and Table \ref{tab:exp_data_2} as  M\_GPU and M\_Maple for our GPU implementation and Maple respectively.

\begin{table*}%[!ht]
  \caption{Timing and memory use of our optimized GPU algorithm and Maple. The max degree of each variate is assigned as 32 while the account of variate is 3, 4, 5 for matrix whose order is from 8 to 20. Time that each stage expends on GPU has also been recorded. }
  \label{tab:exp_data_1}
\setlength{\tabcolsep}{1.1mm}{
  \begin{tabular}{ccccccccccc}
    \toprule
Rank&Prime&Degree&FFT(s)&DET(s)&IFFT(s)&CRT(s)&GPU(s)&Maple(s)&M\_GPU(M)&M\_Maple(M)\\
\midrule
\multirow{3}*{8}&3 & 16, 16, 18 & 62.16 & 0.14 & 0.97 & 0 & 63.27 & 0.16 & 93.696 & 72.3 \\

%\cline{2-11}
&3 & 17, 17, 18, 16 & 86.66 & 4.26 & 1.43 & 0.03 & 92.38 & 0.2 & 109.056 & 64.2 \\
%\cline{2-11}
&3 & 17, 17, 18, 16, 18 & 313.87 & 170.99 & 7.69 & 1.16 & 493.71 & 0.92 & 506.88 & 410.5 \\
\hline

\multirow{3}*{9}&3 & 20, 18, 22 & 78.57 & 0.18 & 0.96 & 0 & 79.71 & 0.57 & 95.232 & 116.8 \\

&3 & 17, 17, 18, 21 & 109.62 & 5.30 & 1.43 & 0.03 & 116.38 & 1.18 & 568.32 & 367 \\

&3 & 17, 17, 18, 21, 21 & 375.36 & 210.56 & 7.57 & 1.13 & 594.62 & 4.63 & 1228.8 & 636.3 \\
\hline

\multirow{3}*{10}&3 & 22, 22, 24 & 96.94 & 0.20 & 0.97 & 0.01 & 98.12 & 1.05 & 96.768 & 128.3 \\

&3 & 22, 22, 24, 25 & 135.36 & 6.41 & 1.43 & 0.03 & 143.23 & 3.01 & 614.4 & 581.3 \\

&3 & 22, 20, 24, 25, 22 & 469.73 & 252.94 & 7.49 & 1.14 & 731.3 & 9.19 & 1336.32 & 1020 \\
\hline

\multirow{3}*{11}&4 & 26, 23, 26 & 550.66 & 0.32 & 4.54 & 0.01 & 555.53 & 3.86 & 99.072 & 130.3 \\

&4 & 21, 20, 21, 22 & 744.01 & 99.60 & 6.24 & 50 & 760.26 & 10.69 & 645.12 & 1027.7 \\

&4 & 21, 20, 21, 22, 22 & 1413.34 & 389.91 & 15.66 & 1.47 & 1820.38 & 116.26 & 1459.2 & 3806.2 \\
\hline

\multirow{3}*{12}&4 & 24, 20, 24 & 655.24 & 0.36 & 4.54 & 0 & 660.14 & 6.93 & 99.84 & 128.8 \\

&4 & 24, 20, 26, 21 & 885.41 & 11.47 & 6.24 & 0.05 & 903.17 & 76.83 & 737.28 & 2773.3 \\

&4 & 24, 20, 26, 21, 22 & 1623.35 & 453.98 & 14.91 & 1.20 & 2093.44 & 1023 & 1566.72 & 12145.3 \\
\hline

\multirow{3}*{13}&4 & 26, 23, 23 & 768.94 & 0.41 & 4.56 & 0 & 773.91 & 12.91 & 102.912 & 381.8 \\

&4 & 29, 29, 29, 31 & 1038.99 & 13.10 & 6.25 & 0.04 & 1058.38 & 505.4 & 798.72 & 7284.9 \\

&5 & 25, 25, 23, 25, 25 & 2063.03 & 653.97 & 16.99 & 1.70 & 2735.69 & >14391.93 & 1582.08 & >26998.3 \\
\hline

\multirow{3}*{14}&4 & 27, 29, 27 & 910.87 & 0.58 & 4.66 & 0 & 916.11 & 25.78 & 118.272 & 383.4 \\

&4 & 27, 26, 27, 26 & 1239.62 & 20.50 & 6.43 & 0.06 & 1266.61 & 14726.44 & 860.16 & 21290.8 \\

&5 & 27, 26, 27, 26, 27 & 2395.9 & 736.66 & 17.18 & 1.75 & 3151.49 & >21740.4 & 1751.04 & >25368 \\
\hline

\multirow{3}*{15}&5 & 29, 28, 25 & 1045.8 & 0.65 & 4.65 & 0 & 1051.1 & 47.92 & 122.88 & 389.8 \\

&5 & 29, 29, 29, 28 & 1432.79 & 23.95 & 6.42 & 0.06 & 1463.22 & >46272.76 & 936.96 & >23771 \\

&5 & 29, 29, 29, 28, 25 & 2880.59 & 849.20 & 17.70 & 1.85 & 3749.43 & >21987.94 & 1858.56 & >23899.4 \\
\hline

\multirow{3}*{16}&5 & 30, 30, 27 & 1189.72 & 0.72 & 4.64 & 0.1 & 1195.09 & 55.62 & 127.488 & 392.6 \\

&6 & 30, 30, 27, 28 & 2064.81 & 34.44 & 8.07 & 0.07 & 2107.39 & >25668 & 1167.36 & >25668.0 \\

&6 & 30, 30, 27, 28 & 4053.78 & 1049.68 & 21.69 & 2.20 & 5127.35 & >14274.35 & 2227.2 & >22739.5 \\
\hline

\multirow{3}*{17}&6 & 29, 31, 28, 29 & 1689.43 & 1.00 & 5.86 & 0 & 1696.29 & 73.05 & 133.632 & 421.6 \\

&6 & 29, 31, 28, 29 & 2331.35 & 37.77 & 8.06 & 0.08 & 2377.26 & >49595.02 & 1228.8 & >27189.2 \\

&6 & 29, 31, 28, 29, 27 & 4549.6 & 1214.64 & 21.67 & 2.22 & 5788.13 & >29375.2 & 2472.96 & >26968.6 \\
\hline

\multirow{3}*{18}&6 & 30, 30, 31 & 1893.41 & 1.10 & 5.85 & 0.01 & 1900.37 & 93.59 & 138.24 & 419.3 \\

&6 & 30, 31, 31, 29 & 2612.53 & 42.69 & 8.06 & 0.09 & 2663.37 & >12763.78 & 1367.04 & >20400.1 \\

&6 & 30, 31, 31, 29, 31 & 5109.1 & 1395.90 & 21.89 & 2.22 & 6529.11 & >26375.2 & 2826.24 & >21968.6 \\
\hline

\multirow{3}*{19}&6 & 27, 31, 30  & 2110.66 & 1.22 & 5.86 & 0.01 & 2117.75 & 98.68 & 145.92 & 422.2 \\

&6 & 31, 31, 30, 30 & 2890 & 47.54 & 8.07 & 0.07 & 2950 & >27425.9 & 1505.28 & >23978.1 \\

&6 & 31, 31, 30, 30, 27 & 5711.13 & 1564.12 & 21.82 & 2.17 & 7299.24 & >25109.9 & 3179.52 & >22127.6 \\
\hline

\multirow{3}*{20}&7 & 30, 29, 30 & 2700 & 1.55 & 6.75 & 0 & 2710 & 131.91 & 184.32 & 419.2 \\

&7 & 30, 31, 30, 31 & 3850 & 60.82 & 9.68 & 0.10 & 3920 & >20475.3 & 1612.8 & >25968.3 \\

&7 & 30, 31, 30, 31, 30 & 7224.69 & 1899.80 & 24.96 & 2.49 & 9151.94 & >24445.2 & 3287.04 & >27938.2 \\

  \bottomrule
\end{tabular}}
\end{table*}

\begin{table*}%[!ht]
  \caption{Running time and memory overhead of our optimized GPU algorithm and those on  Maple. The account of variate is limited into 4 while max degree of each variate is 16, 32, 64 for matrices with same order. The range of matrix order is from 8 to 20 and time that each stage expends on GPU has also been recorded. }
  \label{tab:exp_data_2}
\setlength{\tabcolsep}{1.2mm}{
  \begin{tabular}{ccccccccccc}
    \toprule
Rank&Prime&Degree&FFT(s)&DET(s)&IFFT(s)&CRT(s)&GPU(s)&Maple(s)&M\_GPU(M)&M\_Maple(M)\\
\midrule
\multirow{3}*{8}& 3 & 14, 13, 14, 15 & 647.47 & 0.27 & 10.1 & 0 & 657.84 & 0.06 & 107.52 & 40.3\\

& 3 & 16, 16, 18, 24 & 86.59 & 4.34 & 1.43 & 0.03 & 92.39 & 0.14 & 368.64 & 51.1\\

& 3 & 33, 35, 32, 48 & 334.93 & 87.28 & 6.79 & 0.6 & 429.6 & 0.99 & 4254.72 & 439.7\\
\hline

\multirow{3}*{9}& 3 & 14, 15, 15, 15 & 819.01 & 0.33 & 10.11 & 0.01 & 829.46 & 0.27 & 122.88 & 58.2\\

& 3 & 20, 18, 22, 24 & 109.7 & 5.15 & 1.42 & 0.03 & 116.3 & 0.85 & 460.8 & 419.8\\

& 3 & 36, 32, 38, 38 & 427.65 & 100.58 & 6.42 & 0.55 & 535.2 & 1 & 2734.08 & 358.6\\
\hline

\multirow{3}*{10}& 3 & 14, 15, 15, 15 & 1011.78 & 0.39 & 10.11 & 0 & 1022.28 & 0.54 & 122.88 & 67.8\\

& 3 & 22, 22, 24, 26 & 135.36 & 6.27 & 1.44 & 0.04 & 143.11 & 0.97 & 537.6 & 394.8\\

& 3 & 39, 35, 39, 46 & 529.31 & 122.56 & 6.49 & 0.55 & 658.91 & 11.72 & 3333.12 & 1136.7\\
\hline

\multirow{3}*{11}& 4 & 11, 13, 11, 15 & 1386.37 & 0.64 & 11.46 & 0 & 1398.47 & 1.49 & 103.68 & 387.8\\

& 4 & 26, 23, 26, 27 & 764.71 & 9.99 & 6.43 & 0.04 & 781.17 & 10.03 & 614.4 & 934.4\\

& 4 & 36, 36, 35, 44 & 803.46 & 188.84 & 8.22 & 0.71 & 1001.23 & 43.59 & 3978.24 & 2144.2\\
\hline

\multirow{3}*{12}& 4 & 12, 15, 11, 15 & 1649.93 & 0.77 & 11.45 & 0 & 1662.15 & 1.52 & 92.16 & 451.8\\

& 4 & 24, 20, 24, 27 & 921.98 & 13.58 & 6.42 & 0.06 & 942.04 & 65.91 & 706.56 & 1958.2\\

& 4 & 44, 45, 42, 45 & 957.77 & 218.06 & 8.24 & 0.73 & 1184.8 & 229.37 & 3563.52 & 5585.6\\
\hline

\multirow{3}*{13}& 4 & 14, 13, 13, 14 & 1935.44 & 0.88 & 11.43 & 0.02 & 1947.77 & 7.13 & 112.128 & 611.8\\

& 4 & 26, 23, 23, 28 & 1039.11 & 12.85 & 6.23 & 0.04 & 1058.23 & 168.67 & 768 & 3536.4\\

& 4 & 50, 34, 41, 50 & 1124.3 & 250.45 & 8.25 & 0.71 & 1383.61 & 1916.7 & 3010.56 & 14437.9\\
\hline

\multirow{3}*{14}& 5 & 14, 14, 14, 15 & 2382.02 & 1.19 & 12.15 & 0.01 & 2395.37 & 58.68 & 144.384 & 2410.6\\

& 5 & 27, 26, 27, 27 & 1233.74 & 18.42 & 6.34 & 0.05 & 1258.64 & 434.51 & 921.6 & 8003.03\\

& 5 & 47,41,41,47 & 1889.29 & 357.85 & 11.65 & 0.9 & 2259.69 & >17595.85 & 3394.56 & >22978.5\\
\hline

\multirow{3}*{15}& 5 & 15, 15, 15, 15 & 2734.05 & 1.34 & 12.15 & 0.01 & 2734.05 & 34.63 & 153.6 & 1571.8\\

& 5 & 29, 28, 25, 26 & 1416.4 & 20.72 & 6.43 & 0.06 & 1443.61 & >23145.1 & 1029.12 & >21202.9\\

& 5 & 43, 48, 48, 48 & 2149.16 & 385.62 & 11.63 & 0.9 & 2547.31 & >34072.3 & 3686.4 & >25829.3\\
\hline

\multirow{3}*{16}& 5 & 15, 15, 15, 15 & 3206.34 & 1.54 & 12.55 & 0.01 & 3220.44 & 507.09 & 158.208 & 5827.8\\

& 5 & 30, 30, 27, 30 & 1686.13 & 28.98 & 6.64 & 0.05 & 1721.8 & >27517.9 & 1167.36 & >25845.3\\

& 5 & 50, 56, 48, 50 & 2438.66 & 433.64 & 11.37 & 0.89 & 2884.56 & >25982 & 2288.64 & >26801.4\\
\hline

\multirow{3}*{17}& 6 & 15, 15, 15, 15 & 4268.56 & 2.06 & 14.76 & 0 & 4285.38 & 2787.38 & 168.96 & 15432.7\\

& 6 & 30, 30, 28, 29 & 2389.29 & 38.83 & 8.37 & 0.08 & 2436.57 & >24308.9 & 1290.24 & >26754.1\\

& 6 & 51, 56, 47, 51 & 2945.2 & 587.67 & 12.4 & 1.03 & 3546.3 & >24909.6 & 2565.12 & >25432.8\\
\hline

\multirow{3}*{18}& 6 & 15, 15, 15, 15 & 4779.79 & 2.23 & 14.8 & 0 & 4796.82 & >13652.82 & 107.52 & >24903.3\\

& 6 & 30, 30, 31, 31 & 2686.81 & 42.45 & 8.38 & 0.07 & 2737.71 & >23571.9 & 1428.48 & >25494.3\\

& 6 & 49, 47, 51, 54 & 3344.29 & 653.45 & 12.76 & 1.07 & 3344.29 & >22541.8 & 2826.24 & >25767.5\\
\hline

\multirow{3}*{19}& 6 & 15, 15, 15, 15 & 5344.74 & 2.42 & 14.82 & 0 & 5361.98 & >8786.04 & 168.96 & >26496.6\\

& 6 & 27, 31, 30, 27 & 2983 & 46.87 & 8.39 & 0.07 & 3038.33 & >22409.8 & 1582.08 & >24309.7\\

& 6 & 49, 52, 50, 59 & 3733.73 & 723.41 & 12.84 & 1.08 & 4471.06 & >21090.8 & 3118.08 & >23907.6\\
\hline

\multirow{3}*{20}& 7 & 15, 15, 15, 15 & 6151.13 & 3.1 & 15.41 & 0.01 & 6169.65 & >21009.1 & 199.68 & >23407.9\\

& 7 & 30, 29, 30, 29 & 3825.02 & 59.07 & 9.73 & 0.09 & 3893.91 & >23094.8 & 1735.68 & >24090.7\\

& 7 & 52, 50, 52, 51 & 5988.57 & 920.55 & 17.84 & 1.28 & 6928.24 & >22804.3 & 3425.28 & >25702.6\\

  \bottomrule
\end{tabular}}
\end{table*}

\end{document}